\newcommand\undermat[2]{%
  \makebox[0pt][l]{$\smash{\underbrace{\phantom{%
    \begin{matrix}#2\end{matrix}}}_{\text{$#1$}}}$}#2}
\newtheorem{theorem}{Theorem}
\newtheorem{proposition}[theorem]{Proposition} 
\newtheorem{lemma}[theorem]{Lemma} 
\newtheorem{corollary}[theorem]{Corollary}
\theoremstyle{remark}
\theoremstyle{definition}
\newcommand{\ie}{i.e.,~} %
\def\cQ{\mathcal{Q}}
\def\cR{\mathcal{R}}
\def\KK{\mathbb{K}}
\def\NN{\mathbb{N}}
\def\QQ{\mathbb{Q}}
\def\RR{\mathbb{R}}
\def\RR{\mathbb{R}}
\def\aa{\alpha}
\def\bb{\beta}
\newcommand{\vecone}{\boldsymbol{1}}%
\DeclareMathOperator{\conv}{conv}
\DeclareMathOperator{\verts}{vert}
\DeclareMathOperator{\pyr}{pyr}
\newcommand{\join}{\star}
\DeclareMathOperator{\xc}{xc}
\newcommand{\rs}{\cR}
\newcommand{\defn}[1]{\emph{\color{blue} #1}} %
\newcommand{\set}[2]{\ensuremath{\left\{#1\,\middle|\,#2\right\}}} 
\newcommand{\ffloor}[2]{\left\lfloor{\frac{#1}{#2}}\right\rfloor} %
\newcommand{\fceil}[2]{\left\lceil {\frac{#1}{#2}} \right\rceil} %
\newcommand{\inv}[1]{{{#1}^{-1}}}
\newcommand{\car}[2]{\Delta_{#1}\times\Delta_{#2}}
\newcommand{\dsum}[2]{\Delta_{#1}\oplus\Delta_{#2}}
\newcommand{\pcar}[3]{\pyr_{#1}(\Delta_{#2}\times\Delta_{#3})}
\newcommand{\psum}[3]{\pyr_{#1}(\Delta_{#2}\oplus\Delta_{#3})}
\newcommand{\pc}{\pcar{k}{n}{m}}
\newcommand{\ps}{\psum{k}{n}{m}}
\newcommand{\D}[2]{D(#1,#2)}
\newcommand{\Dab}{\D{\alpha}{\beta}}
\newcommand{\F}[2]{\Phi(#1,#2)}
\newcommand{\pd}{\pi_d}
\newcommand{\pdp}[1]{\pi_{#1}}
\title{Extension complexity of polytopes with few vertices or facets}
\author{Arnau Padrol}
\address{
Sorbonne Universit\'es, Universit\'e Pierre et Marie Curie (Paris 6), Institut de Math\'ematiques de Jussieu - Paris Rive Gauche (UMR 7586), Paris, France
}
\email{arnau.padrol@imj-prg.fr}
\thanks{This research was supported by the DFG
Collaborative Research Center SFB/TR~109 ``Discretization
in Geometry and Dynamics'' as well as the program PEPS
Jeunes Chercheur-e-s 2016 of the INSMI (CNRS)}
\begin{document}

\begin{abstract}
We study the extension complexity of polytopes with few vertices or facets. On the one hand, 
we provide a complete classification of $d$-polytopes with at most $d+4$ vertices according to their extension complexity: Out of the super-exponentially
many $d$-polytopes with $d+4$ vertices, all have extension complexity $d+4$ except for some families of size $\theta(d^2)$. On the other
hand, we show that generic realizations of simplicial/simple $d$-polytopes with $d+1+\alpha$ vertices/facets have extension complexity at least $2 \sqrt{d(d+\alpha)} -d + 1$,
which shows that for all $d>(\frac{\alpha-1}{2})^2$ there are $d$-polytopes with $d+1+\alpha$ vertices or facets and extension complexity $d+1+\alpha$.
\end{abstract}

\maketitle

\section{Introduction}

The \defn{extension complexity} of a polytope $P$, denoted by $\xc(P)$, is the minimal number of facets of a
polytope that can be linearly projected onto~$P$. 
This concept was introduced in combinatorial optimization, 
since polytopes with small extension complexity correspond to  
optimization problems that have efficent formulations as linear programs (see~\cite{ConfortiCornuejolsZambelli2010,Kaibel2011} for recent surveys); but 
it is also very relevant in many other areas thanks to its reformulation in terms of nonnegative matrix factorizations~\cite{Yannakakis1991}.
Despite being the subject of extensive research, it is still far from being well understood, and many basic questions are still unresolved (see the reports \cite{Dagstuhl2013,Dagstuhl2015} for the latest results and many open problems). 
In particular, there are very few polytopes for which the exact extension complexity is known. Examples are 
cubes, Birkhoff polytopes and bipartite matching polytopes~\cite{FioriniKaibelPashkovichTheis2013}, and hypersimplices~\cite{GrandePadrolSanyal2016}.

In this paper, we study the extension complexity of high-dimensional polytopes 
with few vertices; namely, $d$-polytopes with $d+1+\alpha$ 
vertices for fixed (small) values of $\alpha\in\NN$. Of course, since the extension 
complexity is preserved by polar duality (cf. \cite[Proposition~2.8]{GouveiaParriloThomas2013}), this is equivalent to studying the extension complexity of polytopes with few facets.

Our main motivation for this project
was to provide examples of high-dimensional polytopes for which the explicit determination of the
extension complexity is still treatable, in order to obtain a ground set for testing open problems and looking for examples and counterexamples.
The first of the goals is fulfilled: We provide a complete classification of $d$-polytopes with at most $d+4$ vertices according to their extension complexity in Theorem~\ref{thm:main}. However, in view of the final classification, 
it is not clear that this family will be a rich source for useful objects.

Complete understanding of the extension complexity of the next natural family, 
$d$-polytopes with $d+5$ vertices, seems out of reach right now, specially if we take into account 
that this was a non-trivial problem already for $d=2$. This was studied by Shitov,
who proved that every polygon with $7$ vertices has extension complexity~$6$, which induces an upper bound of $\fceil{6n}{7}$ for the extension complexity of an $n$-gon \cite{Shitov2014} (see also \cite{PadrolPfeifle2015} and \cite{Shitov2014a}).

If there was an $\alpha$ such that every $d$-polytope with $d+1+\alpha$ vertices has extension complexity at most $d+\alpha$, this would similarly provide upper bounds for the extension complexity of $d$-polytopes in terms of their number of vertices. (It is still an open problem whether for each $n$ there exist $d$-polytopes with $n$ vertices and extension complexity $n$; for which we only know the answer, in the negative, for the case $d\leq 2$.) However, as we will see, this cannot happen (with an $\alpha$ independent of $d$). Seeing the lower bounds for the extension complexity of generic $d$-polytopes with $d+1+\alpha$ vertices from Section~\ref{sec:bounds}, it might be as well that really interesting behaviors only occur with polytopes with many vertices.

In what remains of this introduction, we present our main results: the classification of $d$-polytopes with up to $d+4$ vertices and the lower bounds for the extension complexity of generic polytopes. The first is developed in Section~\ref{sec:d+4vertices}, and the latter in Section~\ref{sec:bounds}. The results are independent and use completely different tools, and hence can be read separately.

\subsection{The ``threshold for counterexamples''}

Many properties shared by $d$-polytopes with at most $d+3$ vertices start failing for $d$-polytopes with $d+4$ vertices (in a similar way as properties of polytopes of dimension at most~$3$ usually start failing for $4$-polytopes).
This led Sturmfels to call $d$-polytopes with $d+4$ vertices the ``threshold for counterexamples''~\cite{Sturmfels1988}.

On the one hand, the number of combinatorial types of $d$-polytopes with $d+4$ vertices grows super-exponentially in $d$: There are at least ${d}^{\frac{3}{2}d(1-o(1))}$ (labeled) neighborly $d$-polytopes with $d+4$ vertices~\cite{Padrol2013}, and hence at least ${d}^{\frac{1}{2}d(1-o(1))}$ if we count unlabeled combinatorial classes. In contrast, there is only one $d$-polytope with $d+1$ vertices, the simplex; there are $\lfloor\frac{d^2}{2}\rfloor$ $d$-polytopes with $d+2$ vertices~\cite[Theorem~6.1.4]{Gruenbaum}; and there are $O(\frac{\gamma^d}{d})$ $d$-polytopes with $d+3$-vertices, where $\gamma\sim2.83$~\cite{Fusy2006}.

Moreover, the space of realizations of $d$-polytopes with $d+4$ vertices can also be very complicated.
Indeed, $d$-polytopes with $d+4$ vertices already exhibit Mn\"ev's Universality Phenomenon (\cite{Mnev1988}, see also \cite[Sections 6.5 and 6.6]{Ziegler}). Roughly speaking, this means that their realization spaces can be as complicated as any (primary basic) semi-algebraic set. This implies in particular that these realization spaces can have the homotopy type of any simplicial complex and that there are $d$-polytopes with $d+4$ vertices that cannot be realized with rational coordinates (actually, for any finite algebraic extension $\KK$ of $\QQ$, there is a polytope that cannot be realized over~$\KK$). In contrast, the simplex has a unique realization up to affine transformation; all $d$-polytopes with $d+2$ vertices are projectively unique; and the realization space of every $d$-polytope with $d+3$ vertices is contractible and contains rational points.

One is tempted to ask whether $d$-polytopes with $d+4$ vertices are also the threshold of counterexamples for extension complexity.
This would have been of particular interest in the context of the long-term open question that asked whether the rational and real nonnegative rank always coincide~\cite{BeasleyLaffey2009,CohenRothblum1993}, and that has been recently disproved by two independent teams~\cite{CKMSW2016,Shitov2016}. Shitov has even proved a much stronger result, a universality theorem for nonnegative factorizations~\cite{Shitov2016b}.

As we will see, the answer is however no. We use the Projection Lemma~\ref{lem:projlem} (adapted from \cite{SanyalZiegler2010,Ziegler2004}, and strongly related to Gale transforms) 
 to analyze the extension complexity of $d$-polytopes with $d+4$ vertices. They all have complexity $d+4$ except for some sporadic instances that can be constructed via some elementary operations from a finite collection of polytopes. In particular, it is easy to compute the extension complexity of every $d$-polytope with $d+4$ vertices (or facets, by duality).

Here, and throughout the paper, we use $\Delta_d$ to denote a $d$-dimensional simplex; $P \oplus Q$ and $P \times Q$ to represent, respectively, the \defn{direct sum} and the \defn{Cartesian product} of the polytopes $P$ and $Q$; and $\pyr_k(P)$ to denote the \defn{$k$-fold pyramid} over $P$.
See \cite[Sec.~15.1.3]{HenkRichterGebertZiegler1997} for the corresponding definitions.

\begin{theorem}\label{thm:main}
Let $P$ be a $d$-polytope with $d+4$ vertices, then 
\begin{enumerate}[label=(\arabic*)]
\item\label{it:cased+2} $\xc(P)=d+2$ if and only if $P$ has $d+2$ facets.
\item $\xc(P)=d+3$ if and only if:
\begin{enumerate}[label=(\arabic{enumi}.\arabic*)]
\item\label{it:cased+3}
$P$ has $d+3$ facets, or
\item%
$P=\pi(Q)$, where $Q\cong \pcar{d-2}{1}{2}$ for some affine projection~$\pi$.  

In this case, either
\begin{enumerate}[label=(\arabic{enumi}.\arabic{enumii}.\arabic*)]
 \item\label{it:casedesarguian} $P=\pyr_k(Q)$ where $Q$ is a Desarguian hexagon (a hexagon with $\xc(Q)=5$), or
 \item\label{it:casepyramid} $P$ has a subset of $6$ vertices forming a triangular prism.
\end{enumerate}
 \end{enumerate}
\item $\xc(P)=d+4$ otherwise.
\end{enumerate}
\end{theorem}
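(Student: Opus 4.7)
The natural proof strategy combines explicit upper-bound constructions with lower-bound arguments based on the Projection Lemma~\ref{lem:projlem}. Since polar duality preserves extension complexity and swaps vertices with facets, arguments can be dualized freely.

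\textbf{Upper bounds.} In cases \ref{it:cased+2} and \ref{it:cased+3}, $P$ is its own extension. For case (2.2), the polytope $\pcar{d-2}{1}{2}$ is the $(d-2)$-fold pyramid over the triangular prism $\Delta_1\times\Delta_2$, and so has dimension $d+1$, exactly $d+4$ vertices, and exactly $d+3$ facets; hence every $P=\pi(Q)$ with $Q\cong\pcar{d-2}{1}{2}$ satisfies $\xc(P)\le d+3$. Finally, $\xc(P)\le d+4$ follows from $P^*$ having $d+4$ facets.

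\textbf{Lower bounds.} The classical inequality $\xc(R)\ge \dim R+1$, with equality only for simplices, forces $\xc(P)\ge d+2$. Assume $\xc(P)=d+2$ and write $P=\pi(Q)$ with $Q$ having $d+2$ facets. Since $\dim Q \le d+1$, and $\dim Q=d+1$ would make $Q$ a simplex with only $d+2<d+4$ vertices, we must have $\dim Q = d$; then $\pi$ is an affine isomorphism and $P\cong Q$ has $d+2$ facets, proving \ref{it:cased+2}. The same dimensional dichotomy applied to $\xc(P)=d+3$ produces either $P\cong Q$ with $d+3$ facets (case \ref{it:cased+3}) or a genuine projection from a $(d+1)$-polytope $Q$ with $d+3$ facets and $\dim Q = d+1$.

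\textbf{Identifying (2.2).} The $(d+1)$-polytopes with $d+3$ facets admit a classification via their dual ($1$-dimensional) Gale diagrams as iterated pyramids over products and sums of at most three simplices. Using the Projection Lemma to translate the surviving-vertex conditions into linear constraints on the fibers of $\pi$, a case check through this list shows that $\pcar{d-2}{1}{2}$ is the only $Q$ that admits an affine projection onto a $d$-polytope with $d+4$ (distinct) vertices that is not already combinatorially $Q$ itself; every other candidate either collapses vertices or produces a $P$ already accounted for in \ref{it:cased+2}--\ref{it:cased+3}. The refinement into subcases \ref{it:casedesarguian}--\ref{it:casepyramid} is obtained by examining the kernel direction of $\pi$ relative to the pyramidal structure of $Q$: either the kernel projects the underlying triangular prism onto a hexagon, which by a small projective-geometric argument is necessarily a Desarguian hexagon with $\xc=5$, yielding \ref{it:casedesarguian}; or a triangular-prism face on six of the $d+4$ vertices of $P$ survives intact, yielding \ref{it:casepyramid}. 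Case (3) is then the complement of all the above.

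\textbf{Main obstacle.} The technical heart of the proof is the projection-lemma bookkeeping needed to rule out all $(d+1)$-polytopes with $d+3$ facets other than $\pcar{d-2}{1}{2}$ as possible sources of a $(d+4)$-vertex projection; this must be handled uniformly in $d$ through the $1$-dimensional Gale combinatorics, rather than case-by-case. A secondary difficulty is the hexagon subcase, where one must recognize projections of the triangular prism onto hexagons as exactly the Desarguian ones via Desargues' theorem.
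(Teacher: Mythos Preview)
Your proposal is correct and follows essentially the same route as the paper: reduce to a codimension-one projection from some $\pcar{k}{n}{m}$, use the Projection Lemma to bound how many vertices of $Q$ can fail to be strictly preserved (the paper makes this precise as at most two, via Lemma~\ref{lem:allbut2}), solve the resulting equation $nm+d+2-\ell=d+4$ to isolate $\pcar{d-2}{1}{2}$ after reducing or ruling out the handful of other solutions, and then split on whether the image of the underlying triangular prism is $2$- or $3$-dimensional. Two small slips worth fixing: $(d+1)$-polytopes with $d+3$ facets are iterated pyramids over products of \emph{two} simplices (not ``products and sums of at most three''), and in case~\ref{it:casepyramid} the six prism vertices form only a subpolytope of $P$, not necessarily a face.
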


The proof of this theorem is split across several results in Section~\ref{sec:d+4vertices}, which describe the combinatorics of the possible polytopes that can appear in each of the cases.

For a $d$-polytope $P$ with $d+4$ vertices:
\begin{enumerate}[label=(\arabic*)]
\item $\xc(P)=d+2$ if and only if $P$ is combinatorially equivalent to~$\pcar{d-4}{1}{3}$ (Lemma~\ref{lem:xc=d+2} and Proposition~\ref{prop:d+4verticesd+3facets});
\item $\xc(P)=d+3$ if and only if:
\begin{enumerate}[label=(\arabic{enumi}.\arabic*)]
\item $P$ is an iterated pyramid over one of 
the $8$ sporadic non-pyramidal $d$-polytopes with $d+4$ vertices and 
$d+3$ facets (Proposition~\ref{prop:d+4verticesd+3facets})
\item\label{it:caseprismproj} $P=\pi(Q)$, where $Q\cong \pcar{d-2}{1}{2}$ for some affine projection~$\pi$ (Proposition~\ref{prop:d+4classification}). 

In this case, either
\begin{enumerate}[label=(\arabic{enumi}.\arabic{enumii}.\arabic*)]
 \item%
 $P=\pyr_k(Q)$ where $Q$ is a Desarguian hexagon (Lemma~\ref{lem:desarguian}), or
 \item%
 $P$ has a subset of $6$ vertices forming a triangular prism (Lemma~\ref{lem:d+4classification}).
  Which means that either
 \begin{enumerate}[label=(\arabic{enumi}.\arabic{enumii}.\arabic{enumiii}.\arabic*)]
  \item\label{it:joinsumprod} $P\cong \pyr_k(\car{1}{2})\join(\dsum{n}{m}))$ (where $n+m+k=d-4$), or
  \item\label{it:pyrsuspLaw} $P$ can be obtained from $\car{1}{2}$ via the operations of \emph{pyramid}, \emph{one-point-suspension} or \emph{Lawrence extension} on an extra (projective) point.
\end{enumerate}
\end{enumerate}
\end{enumerate}
\item $\xc(P)=d+4$ otherwise.
\end{enumerate}

In the families corresponding to the cases \ref{it:cased+2}, \ref{it:cased+3} and \ref{it:joinsumprod}, the extension complexity is completely determined by the 
combinatorial type. Deciding the extension complexity in the cases \ref{it:casedesarguian} and \ref{it:pyrsuspLaw} amounts to checking whether certain lines are concurrent.

For fixed~$d$, families \ref{it:joinsumprod} and \ref{it:pyrsuspLaw} have a size quadratic in~$d$ (see Corollary~\ref{cor:number}). Hence, out of the super-exponentially many combinatorial types of $d$-polytopes with $d+4$ vertices, there are only $\theta(d^2)$ that have realizations with extension complexity smaller than $d+4$.

\subsection{Lower bounds on extension complexity}

In Section~\ref{sec:bounds} we provide a lower bound for the extension complexity of generic polytopes in terms of the dimension of their realization space. With it we conclude that for every $d,\alpha,\beta\geq 0$, 
\begin{itemize}
 \item a generic (simplicial) $d$-polytope $P$ with $d+1+\alpha$ vertices has extension complexity 
 \[\xc(P)\geq 2 \sqrt{d(d+\alpha)} -d + 1;\]
 \item a generic (simple) $d$-polytope $P$ with $d+1+\beta$ facets has extension complexity \[\xc(P)\geq 2 \sqrt{d(d+\beta)} -d + 1.\]
\end{itemize}
Here, \emph{generic} has to be understood as randomly drawn among all \emph{geometric} $d$-polytopes with $d+1+\alpha$ vertices or $d+1+\beta$ facets, respectively (see below for the precise definitions).

In particular, this shows that looking for upper bounds for the extension complexity of $d$-polytopes with $d+1+\alpha$ vertices
is not a good approach for finding upper bounds for the extension complexity of $d$-polytopes in terms of their number of vertices: 
When $d>(\frac{\alpha-1}{2})^2$ there are $d$-polytopes with $d+1+\alpha$ vertices (or facets) with extension complexity $d+1+\alpha$ (Corollary~\ref{cor:alphabound}).

Observe also that the bounds above, when specialized to $d=2$, give a lower bound of $2\sqrt{2n-2} - 1$ for the extension complexity of a generic (even rational) $n$-gon (Corollary~\ref{cor:polygonbound}).
This bound is tight for $n\leq15$ 
\cite{VandaeleGillisGlineurTuyttens2015}, 
but also for general $n$ up to a multiplicative constant, as the \emph{admissible $n$-gons} of Shitov show~\cite{Shitov2014a}.
This order of magnitude was already attained by the previous best lower bound of $\sqrt{2n}$ for the extension complexity of generic $n$-gons, by Fiorini, Rothvo{\ss} and Tiwary~\cite{FioriniRothvossTiwary2012}. However, their approach did not extend directly to the rational case, where they got a lower bound of order $\Omega(\sqrt{{n}/{\log n}})$.

The precise statement of our results uses the concept of 
\defn{realization space}
of a (combinatorial type of) polytope $P$, which is 
the set $\rs(P)$ of all polytopes that are combinatorially equivalent to $P$ (see \cite{RichterGebert1997} for an extensive monograph on realization spaces of polytopes). 
Parametrized by the vertex coordinates, or by the facet defining inequalities, it is a primary basic semi-algebraic set. We use $\rs_{\xc\leq K}(P)$ to deonote the 
subset containing those instances with extension complexity at most $K$.
\begin{restatable}{theorem}{thmdimrs}
 Let $P$ be a polytope whose %
 realization space has dimension~$r$, then 
 $\rs(P)\setminus \rs_{\xc\leq K}(P)$ is a full-dimensional dense semi-algebraic subset of $\rs(P)$ for every \[K <   2 \sqrt{r - d} -d + 1.\]
 In particular:
\begin{enumerate}
  \item For every $Q\in \rs(P)$ there is some polytope $Q'\in \rs(P)$ arbitrarily close to $P$ in Hausdorff distance such that 
  \begin{equation*}
  \xc(Q') \geq   2 \sqrt{r - d} -d + 1.
  \end{equation*}
  
  \item If $R$ is drawn randomly from a continuous probability distribution on~$\rs(P)$, then  almost surely
  \begin{equation*}
  \xc(R) \geq   2 \sqrt{r - d} -d + 1.
  \end{equation*}
\end{enumerate}
\end{restatable}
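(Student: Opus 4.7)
The plan is to bound $\dim \rs_{\xc \leq K}(P)$ by a parameter count on extensions and then invoke the standard fact that a semi-algebraic subset of strictly smaller dimension has a dense, full-dimensional complement.

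Every $Q \in \rs_{\xc \leq K}(P)$ admits an extension $Q = \pi(Q')$ with $Q'$ a $D$-polytope having at most $K$ facets (so $d \le D \le K-1$) and $\pi \colon \RR^D \to \RR^d$ an affine map. This writes $\rs_{\xc \leq K}(P)$ as the union over $D \in [d,K-1]$ of the images of the evaluation maps $\Phi_D \colon (Q',\pi) \mapsto \pi(Q')$, each defined on a semi-algebraic parameter space; in particular $\rs_{\xc \leq K}(P)$ is itself semi-algebraic by Tarski--Seidenberg. For fixed $D$, I parametrize $Q'$ by its $K$ facet-defining halfspaces (each a $D$-dimensional object after quotienting out positive scaling) and $\pi$ by its $d(D+1)$ affine coefficients, giving $KD + d(D+1)$ parameters in total. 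Because $(L(Q'),\, \pi\circ L^{-1})$ yields the same image for every $L \in \mathrm{Aff}(D)$, and for a generic $Q'$ (one with no nontrivial affine self-map) this action is free, the generic fibre of $\Phi_D$ has dimension at least $D(D+1) = \dim \mathrm{Aff}(D)$, whence
\[
\dim \operatorname{im}(\Phi_D) \;\leq\; KD + d(D+1) - D(D+1) \;=\; D(K+d-D-1) + d.
\]

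As a function of the real variable $D$, this concave quadratic is maximised at $D = (K+d-1)/2$, where it takes the value $(K+d-1)^2/4 + d$, so
\[
\dim \rs_{\xc \leq K}(P) \;\leq\; \frac{(K+d-1)^2}{4} + d.
\]
The hypothesis $K < 2\sqrt{r-d} - d + 1$ is exactly equivalent to the right-hand side being strictly less than $r$. Since closures do not raise the dimension of a semi-algebraic set, $\overline{\rs_{\xc \leq K}(P)}$ cannot contain any open subset of $\rs(P)$, so its complement in $\rs(P)$ is open, dense, and of full dimension $r$. Claim (1) then follows immediately from density, and claim (2) from the fact that a semi-algebraic set of dimension strictly below the ambient one has Lebesgue measure zero in $\rs(P)$, and hence probability zero under any continuous distribution.

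The principal step requiring care is the generic-fibre assertion underlying the parameter count: that for a generic pair $(Q',\pi)$ the stabiliser in $\mathrm{Aff}(D)$ is trivial. This reduces to the standard genericity fact that a $D$-polytope with at least $D+1$ facets has no nontrivial affine self-map outside a lower-dimensional locus, so that the only $L$ simultaneously preserving $Q'$ and satisfying $\pi\circ L = \pi$ is the identity. Any additional fibre redundancy only sharpens the bound, so the estimate stands.
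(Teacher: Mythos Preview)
Your proof is correct and follows essentially the same approach as the paper: a parameter count on extensions showing $\dim \rs_{\xc\leq K}(P) < r$ whenever $K < 2\sqrt{r-d}-d+1$, followed by the standard semi-algebraic fact that a lower-dimensional subset has dense full-dimensional complement. The only cosmetic difference is that the paper normalizes the projection to be the coordinate map $\RR^D\to\RR^d$ (absorbing the $d(D+1)$ parameters of $\pi$ into the choice of coordinates on $\RR^D$) and then quotients by the residual $(D+1)(D-d)$-dimensional group of affine maps fixing the first $d$ coordinates, whereas you keep $\pi$ as a free parameter and quotient by all of $\mathrm{Aff}(D)$; since $KD + d(D+1) - D(D+1) = KD - (D+1)(D-d)$, the two bounds coincide.
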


To recover the statements above, observe that the realization space of a simplicial $d$-polytope with $n$ vertices always has dimension~$dn$; and the realization space of a simple
$d$-polytope with $m$ facets always has dimension~$dm$. 
Moreover, polytopes with rational coordinates are dense in these realization spaces, and hence one can also impose the approximating polytope $Q'$ to be rational.

\section{Polytopes with at most $d+4$ vertices or facets}\label{sec:d+4vertices}

In this section we study the extension complexity of $d$-polytopes with at most $d+4$ vertices.
As a warm-up, we start with the classification of $d$-polytopes with $<d+4$ vertices, which is straightforward. Then we do the full characterization in Section~\ref{sec:d+4verticesmanyfacets}. We end with a  combinatorial characterization of $d$-polytopes with $d+4$ vertices and at most $d+3$ facets, which appear as a special family in our classification.

As for notation, $P$ will typically denote a $d$-dimensional polytope whose extension complexity $\xc(P)$ we want to understand, and $Q$ will be one of its extensions; a polytope for which there is a linear projection $\pi$ such that $\pi(Q)=P$. 

\subsection{Polytopes with less than $d+4$ vertices}\label{sec:lessthand+4vertices}

The cases of $d$-polytopes with $d+1$ or $d+2$ vertices are trivial.

\begin{lemma}\label{lem:xc=d+2}
The only $d$-polytope $P$ with $\xc(P)=d+1$ is the $d$-simplex $\Delta_{d}$, and the only with $\xc(P)=d+2$ are those with $d+2$ vertices or facets.
\end{lemma}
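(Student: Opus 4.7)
The plan is to leverage two elementary facts: (i) any $k$-polytope has at least $k+1$ facets, with equality if and only if it is a $k$-simplex; and (ii) if $\pi(Q)=P$ is an affine projection with $\dim Q=\dim P$, then $\pi$ restricts to an affine isomorphism between $\aff(Q)$ and $\aff(P)$, so $P$ and $Q$ are affinely equivalent. From (i) the inequality $\xc(P)\geq \dim(P)+1$ is immediate, since any extension $Q$ of $P$ must satisfy $\dim Q\geq \dim P$ and hence has at least $\dim P+1$ facets.

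For the first claim, if $P=\Delta_d$ then $Q=P$ trivially gives $\xc(P)\leq d+1$. Conversely, if $\xc(P)=d+1$, any witnessing extension $Q$ has $d+1$ facets and therefore $\dim Q\leq d$; combined with $\dim Q\geq d$ we get $\dim Q=d$, and then $Q$ must be the $d$-simplex by (i). Applying (ii) yields $P\cong\Delta_d$.

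For the second claim, the ``if'' direction is essentially automatic. When $P$ has $d+2$ facets, $Q=P$ gives $\xc(P)\leq d+2$, and the lower bound $\xc(P)\geq d+2$ follows from the first part since $P$ is not a simplex. When $P$ has $d+2$ vertices, I apply the same argument to the polar dual $P^\ast$ and invoke the polar invariance of $\xc$ cited in the introduction. For the ``only if'' direction, I take a minimal extension $Q$ with $d+2$ facets and split on the value of $\dim Q$, which must lie in $\{d,d+1\}$ by (i) together with $\dim Q\geq d$. If $\dim Q=d$, observation (ii) forces $P\cong Q$, so $P$ has $d+2$ facets. If $\dim Q=d+1$, then $Q$ is a $(d+1)$-simplex by (i), and $P$ is the convex hull of the images of its $d+2$ vertices; thus $P$ has at most $d+2$ and at least $d+1$ vertices. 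The case of $d+1$ vertices is excluded because it would make $P$ a $d$-simplex with $\xc(P)=d+1$, contradicting $\xc(P)=d+2$; hence $P$ has exactly $d+2$ vertices.

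There is no real obstacle: the entire argument is a short case analysis built on the extremal characterization of simplices among low-facet polytopes. The only point to be careful about is the equidimensional-projection principle (ii), which guarantees that projection preserves the combinatorial type (and in particular the number of facets) whenever dimensions match, so that the small-facet extensions of $P$ can be completely enumerated.
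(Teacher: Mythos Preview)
Your proof is correct and follows essentially the same approach as the paper's: a case split on the dimension of a minimal extension $Q$, using that a $k$-polytope has at least $k+1$ facets (with equality only for simplices), so $Q$ is either $P$ itself or a $(d+1)$-simplex. You are simply more explicit than the paper, spelling out both directions of the equivalences, the polar-duality step for the $d+2$-vertex case, and the exclusion of the simplex in the final step.
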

\begin{proof}
Any polytope of dimension higher than $d$ has at least $d+2$ facets, hence a $d$-polytope $P$ with $\xc(P)\leq d+2$ is either its own optimal extension and has $\xc(P)$ facets, or the projection of a $(d+1)$-simplex, in which case it has at most $d+2$ vertices.
\end{proof}

This settles the extension complexity of $d$-polytopes with $d+2$ vertices. 
Their complete characterization, which we will use later, is classical (see \cite[Section~6.1]{Gruenbaum} or \cite[Section 6.5]{Ziegler} for more details).

\begin{lemma}\label{lem:d+2char}
Every $d$-polytope with $d+2$ vertices (resp. facets) is projectively equivalent to $\ps$ (resp. $\pc$),
for some $k\geq 0$ and $n,m\geq 1$ with $k+n+m=d$.
\end{lemma}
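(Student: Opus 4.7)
The plan is to handle the vertex case by induction on the number of pyramid apexes of~$P$, using the Radon partition of the vertices as the main tool, and to deduce the facet case by polar duality.

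For the vertex case, if $P$ has a pyramid apex $v$ I write $P=\pyr(P')$ with $P'$ a $(d-1)$-polytope having $(d-1)+2$ vertices and apply induction. So assume $P$ has no pyramid apex. The $d+2$ affinely spanning vertices $V=\{v_1,\dots,v_{d+2}\}$ admit a unique (up to scaling) affine dependence $\sum_i\lambda_iv_i=0$ with $\sum_i\lambda_i=0$, and I would partition $V$ into $V_+,V_-,V_0$ according to the sign of $\lambda_i$. The first key step is to show $V_0=\emptyset$: if $\lambda_j=0$ then $(\lambda_i)_{i\neq j}$ is still an affine dependence of $V\setminus\{v_j\}$, and since the space of affine dependences of $V$ is one-dimensional, a rank argument shows that $V\setminus\{v_j\}$ affinely spans a hyperplane $H$ with $v_j\notin H$. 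A short argument using a linear functional vanishing on $H$ and positive on $v_j$ then gives $P\cap H=\conv(V\setminus\{v_j\})$, a facet of $P$, exhibiting $v_j$ as a pyramid apex and contradicting the assumption.

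With $V=V_+\sqcup V_-$, set $n+1=|V_+|$, $m+1=|V_-|$, so $n+m=d$. Next I would show that both $V_+$ and $V_-$ are affinely independent: any affine dependence on $V_+$ alone extends by zero on $V_-$ to an affine dependence of $V$ not proportional to $(\lambda_i)$, contradicting one-dimensionality. An analogous argument shows that if $|V_+|=1$ then the single element of $V_+$ lies in the relative interior of the $d$-simplex $\conv V_-$, contradicting that it is a vertex of~$P$; hence $n,m\geq 1$. Thus $\conv V_+$ is an $n$-simplex and $\conv V_-$ is an $m$-simplex, and a dimension count combined with Radon's theorem forces $\aff V_+\cap\aff V_-=\{c\}$ to be a single point lying in the relative interiors of both simplices, with the two affine spans complementary in~$\RR^d$.

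From here, an affine change of coordinates translating $c$ to the origin and aligning $\aff V_+=\RR^n\times\{0\}$ and $\aff V_-=\{0\}\times\RR^m$, followed by independent linear transformations within each factor (extended by identity on the other), brings $V_+$ and $V_-$ to vertex sets of standard simplices centered at the origin. This displays $P$ as affinely equivalent to $\Delta_n\oplus\Delta_m$. Reinstating the $k$ pyramid apexes peeled off at the start yields $P\cong\ps$ up to projective equivalence. The facet case follows immediately from polar duality: if $P$ has $d+2$ facets then its polar dual has $d+2$ vertices, hence is projectively equivalent to $\ps$; dualizing again and using that polar duality interchanges the direct sum with the product of simplices and commutes with the pyramid construction yields $P\cong\pc$. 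The hard part is the identification of $V_0$ with the set of pyramid apexes; verifying carefully that $\lambda_j=0$ forces $P\cap H$ to be a facet of $P$ requires the supporting-functional argument above, after which everything reduces to routine linear algebra combined with Radon's theorem.
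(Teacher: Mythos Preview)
Your overall strategy---peel off pyramid apexes, then analyze the unique affine dependence (Radon partition) of the remaining vertices, and finish by duality---is exactly the classical route that the paper cites (Gr\"unbaum \cite[Section~6.1]{Gruenbaum}, Ziegler \cite[Section~6.5]{Ziegler}); the paper itself gives no proof. The identification of $V_0$ with the set of pyramid apexes, the argument that $n,m\geq 1$, and the dimension count forcing $\aff V_+\cap\aff V_-$ to be a single interior point are all correct.

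There is, however, a genuine gap in the normalization step. You assert that after translating the Radon point $c$ to the origin and splitting $\RR^d=\RR^n\times\RR^m$, ``independent linear transformations within each factor'' send $V_+$ and $V_-$ to the vertex sets of standard simplices centered at the origin, concluding that $P$ is \emph{affinely} equivalent to $\Delta_n\oplus\Delta_m$. This is false in general. After the translation, the points of $V_+$ satisfy the single linear relation $\sum_{i\in V_+}\lambda_i(v_i-c)=0$; a linear map of $\RR^n$ can carry them to the standard vertices $w_1,\dots,w_{n+1}$ (which satisfy $\sum_j w_j=0$) only if the two relations are proportional, i.e.\ only if all $\lambda_i$ for $i\in V_+$ are equal---equivalently, only if $c$ is the centroid of $\conv V_+$. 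Already for $d=2$ this fails: a generic convex quadrilateral is not affinely equivalent to a square, because the diagonals need not bisect each other.

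The statement only claims \emph{projective} equivalence, and that is what actually holds. The fix is to replace your linear normalization by a projective one: a projective transformation of $\RR^d$ fixing each of the two complementary flats $\aff V_+$ and $\aff V_-$ can independently rescale the barycentric coordinates of $c$ in each simplex (equivalently, in the Gale picture, rescale each coordinate of the one-dimensional Gale diagram by a positive factor), which lets you move $c$ to the centroid of both simplices simultaneously. With this correction the rest of your argument, including the duality step for the facet case, goes through.
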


Polytopes with $d+3$ vertices are also easy to analyze.

\begin{proposition}
 The extension complexity of every $d$-polytope $P$ with $d+3$ vertices is the 
 minimum of its number of vertices and facets.

 That is, $\xc(P)=d+3$ except if $P$ is combinatorially equivalent (and hence projectively equivalent)
 to $\pcar{d-3}{1}{2}$, the $(d-3)$-fold pyramid over the triangular prism, which has extension complexity~$d+2$.
\end{proposition}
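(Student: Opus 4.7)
The plan is to combine Lemmas \ref{lem:xc=d+2} and \ref{lem:d+2char} with a simple vertex count. The trivial upper bound gives $\xc(P) \le d+3$ since $P$ is its own extension, so the task is to determine precisely when $\xc(P) \le d+2$, and this will force a strong combinatorial constraint.

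First, I would observe that $\xc(P) \ge d+2$ whenever $P$ has more than $d+1$ vertices, because by Lemma \ref{lem:xc=d+2} the only $d$-polytope of extension complexity $d+1$ is the simplex. So if $\xc(P) < d+3$, then $\xc(P) = d+2$, and Lemma \ref{lem:xc=d+2} applies to say that $P$ has either $d+2$ vertices or $d+2$ facets. Since we assumed $P$ has $d+3$ vertices, it must have exactly $d+2$ facets.

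Next, by Lemma \ref{lem:d+2char}, a $d$-polytope with $d+2$ facets is projectively equivalent to $\pcar{k}{n}{m}$ for some nonnegative integers with $k+n+m=d$ and $n,m\ge 1$. The polytope $\pcar{k}{n}{m}$ has $(n+1)(m+1)+k$ vertices, so the equation
\begin{equation*}
(n+1)(m+1)+k \;=\; d+3 \;=\; (k+n+m)+3
\end{equation*}
becomes $(n+1)(m+1) = (n+1)+(m+1)+1$, i.e., $(n-0)(m-0)=\ldots$; setting $a=n+1$, $b=m+1$ this reads $(a-1)(b-1)=2$. The only solution with $a,b\ge 2$ is $\{a,b\}=\{2,3\}$, which forces $\{n,m\}=\{1,2\}$ and $k=d-3$. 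Hence $P$ is projectively equivalent to $\pcar{d-3}{1}{2}$.

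Conversely, $\pcar{d-3}{1}{2}$ has $d+3$ vertices and $d+2$ facets, so it is its own smallest extension, giving $\xc(\pcar{d-3}{1}{2}) = d+2$. In all remaining cases the lower bound $\xc(P) \ge d+3$ matches the trivial upper bound, so $\xc(P)=d+3=\min(\text{vertices},\text{facets})$. I do not foresee a serious obstacle here; the only delicate point is ensuring that combinatorial equivalence to $\pcar{d-3}{1}{2}$ indeed upgrades to projective equivalence, but this is exactly the content of Lemma \ref{lem:d+2char}, since having $d+3$ vertices in this case is equivalent to having $d+2$ facets.
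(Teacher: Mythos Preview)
Your proof is correct and follows essentially the same route as the paper: reduce via Lemma~\ref{lem:xc=d+2} to the case of $d+2$ facets, apply Lemma~\ref{lem:d+2char}, and solve the resulting Diophantine equation $(n+1)(m+1)=n+m+3$ to isolate $\{n,m\}=\{1,2\}$. Your write-up is in fact slightly more complete, as you spell out the lower bound $\xc(P)\geq d+2$ and the converse direction explicitly.
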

\begin{proof}
By Lemma~\ref{lem:xc=d+2}, $P$ can only have $\xc(P)\leq d+2$ if it has $d+2$ facets.
By Lemma~\ref{lem:d+2char}, $P$ is projectively equivalent to
~$\pc$.
Now, $\pc$ has $k+n+m+2$ facets and $k+(n+1)(m+1)$ vertices. The only solutions in the natural numbers of the equation 
$$k+n+m+3 =k+(n+1)(m+1)$$
are $(k\geq 0, n=2, m=1)$ and $(k\geq 0, n=1, m=2)$.
Which means that $P$ is an iterated pyramid over a triangular prism.
\end{proof}
\subsection{Polytopes with $d+4$ vertices and at least $d+4$ facets}\label{sec:d+4verticesmanyfacets}

First of all, observe that we can focus on those polytopes that have at least $d+4$ facets,
because extension complexity is preserved by duality, and we have already studied
those with at most $d+3$ vertices. Up to taking pyramids, there are only finitely many combinatorial types of $d$-polytopes with $d+4$ vertices and at most $d+3$ facets~\cite{Padrol2016}, that we enumerate in Section~\ref{sec:d+4verticesfewfacets}.

The following easy observation, whose proof is analogue to that of Lemma~\ref{lem:xc=d+2}, shows that we have to focus on the case where the projection is of co-dimension one. 

\begin{lemma}\label{lem:onlyonelift}
 If a $d$-polytope $P$ with $d+4$ vertices and at least $d+4$ facets has an extension~$Q$
 with less than $d+4$ facets, then $Q$ is $(d+1)$-dimensional and has $d+3$ facets.
\end{lemma}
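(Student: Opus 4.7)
The plan is to mirror the elementary dimension count of Lemma~\ref{lem:xc=d+2}, but with the threshold shifted up to $d+4$. Let $Q$ be an extension of $P$ with fewer than $d+4$ facets and set $e = \dim(Q)$. Since every $e$-polytope has at least $e+1$ facets, the hypothesis that $Q$ has at most $d+3$ facets immediately forces $e \leq d+2$. The goal is then to rule out both extremes $e = d$ and $e = d+2$, leaving only $e = d+1$, at which point the single remaining possibility will be $d+3$ facets.

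First I would dispose of the equidimensional case $e = d$. If the projection $\pi\colon Q \to P$ has image of the same dimension as its domain, then its restriction to $\aff(Q)$ is an affine isomorphism onto $\aff(P)$, so $Q$ is affinely equivalent to $P$ and in particular inherits its $\geq d+4$ facets, contradicting the hypothesis on $Q$.

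Next I would rule out $e = d+2$. The only $(d+2)$-polytope with at most $d+3$ facets is the $(d+2)$-simplex, so $Q$ would be such a simplex and would have exactly $d+3$ vertices. Because an affine projection cannot create new vertices, $P$ would then have at most $d+3$ vertices, contradicting the assumption that $P$ has $d+4$ vertices. The same observation handles the last loose end in the remaining case $e = d+1$: here $Q$ has either $d+2$ or $d+3$ facets, and the first option would again force $Q$ to be a simplex, this time of dimension $d+1$ with only $d+2$ vertices, once more contradicting $|\verts(P)| = d+4$. Hence $Q$ must have exactly $d+3$ facets, as claimed.

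The argument is essentially bookkeeping on the pair $(\dim Q, \#\text{facets}(Q))$; the only non-tautological inputs are the classical facts that a $k$-polytope has at least $k+1$ facets (with equality iff it is a simplex) and that projections do not increase vertex counts. Consequently there is no real obstacle: the work lies entirely in being careful that the vertex count of $P$ rules out every case save the intended one.
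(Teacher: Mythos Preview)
Your proof is correct and follows exactly the approach the paper intends: the paper does not spell out the argument but simply says it is ``analogue to that of Lemma~\ref{lem:xc=d+2}'', and your dimension/facet bookkeeping together with the simplex observation is precisely that analogue. The only implicit step you might make explicit is that $e\geq d$ because $\pi$ is surjective onto a $d$-dimensional image, but this is standard.
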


In order to study possible candidates for $Q$, we will use (a simplified version of) the Projection Lemma of Sanyal and Ziegler \cite{SanyalZiegler2010,Ziegler2004}, which can be also
understood in terms of McMullen's transforms and diagrams~\cite{McMullen1979}. 
It characterizes the faces preserved under projections.
If $P=\pi(Q)$ is the image of $Q$ under an affine projection $\pi$, we say that a proper face $F$ of $Q$ is 
\defn{preserved} by $\pi$ if $\pi(F)$ is a face of $P$ and $\inv{\pi}(\pi(F))=F$, and that it is 
\defn{strictly preserved} if moreover $\pi(F)$ and $F$ have the same dimension.

\begin{figure}[htpb]
\centering 
\includegraphics[width=.25\linewidth]{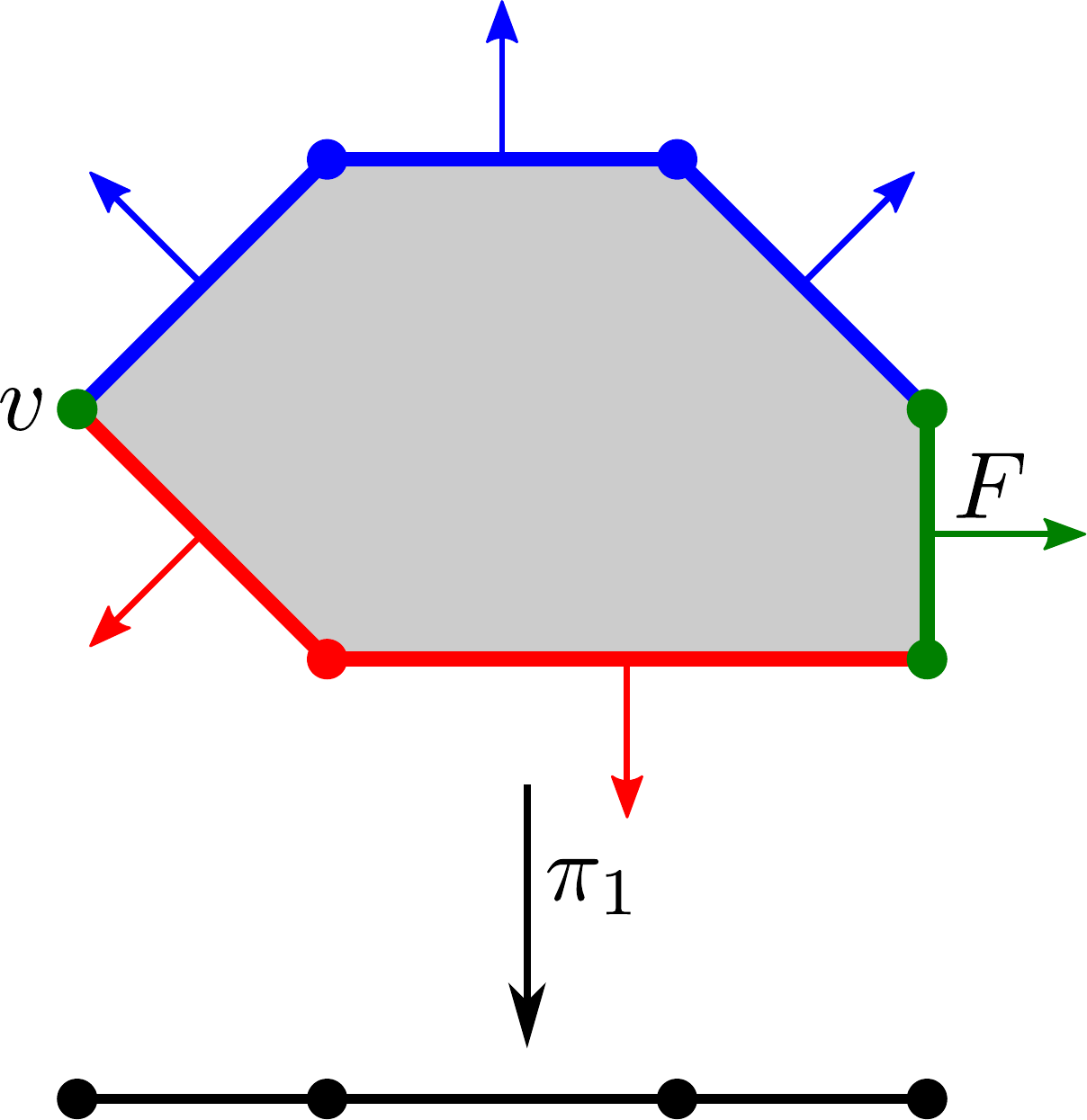}

\caption{A hexagon with 3 upper, 1 vertical and 2 lower facets. The vertex $v$ and the edge $F$ are preserved under $\pdp{1}$, and $v$ is strictly preserved. }\label{fig:projlemma}
\end{figure}

By Lemma~\ref{lem:onlyonelift}, we can always assume that the projection is the map $\pd:\RR^{d+1}\to\RR^d$ that forgets the last coordinate.
In this setup, the Projection Lemma can be expressed in the following terms.
A facet $F$ of a polytope $Q$ with
outer normal vector $n_F$ is \defn{upper}, \defn{lower} or \defn{vertical} according
to whether the last coordinate of $n_F$ is positive, negative or zero, respectively.
Two facets of $Q$ are \defn{complementary} if one of them is upper and the other lower.

\begin{lemma}[{Projection Lemma \cite[Lem.~2.5]{SanyalZiegler2010} \cite[Prop.~3.2]{Ziegler2004}}]\label{lem:projlem}
Let $F$ be a proper face of a $(d+1)$-polytope $Q$, and let $F_1,\dots,F_k$ be the facets of $Q$ containing $F$. Then $F$
is strictly preserved by $\pd$ if and only if 
$F_1,\dots,F_k$ contain a pair of complementary facets, and it is 
preserved but not strictly preserved if $F_1,\dots,F_k$
are all vertical.
\end{lemma}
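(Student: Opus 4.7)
The plan is to translate the two preservation conditions into statements about the normal cone $N_Q(F) = \cone(n_{F_1},\dots,n_{F_k})$ and then read them off by inspecting the last coordinates $a_i := (n_{F_i})_{d+1}$ of the facet normals.

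First, I would observe that every proper face of $P = \pd(Q)$ is the $\pd$-image of some face of $Q$ maximizing a \emph{vertical} functional: any $c\in\RR^d$ lifts to $(c,0)\in\RR^{d+1}$, and the face of $P$ maximizing $c$ is precisely the $\pd$-image of the face of $Q$ maximizing $(c,0)$. A direct verification then shows that both parts of the ``preserved'' condition ($\pd(F)$ being a face of $P$ and $\inv{\pd}(\pd(F))\cap Q = F$) hold simultaneously if and only if some vertical vector $(c,0)$ attains its maximum over $Q$ exactly on $F$, i.e., if and only if some vertical vector lies in the \emph{relative interior} of $N_Q(F)$. Using the standard description $\relint(\cone(n_{F_1},\dots,n_{F_k})) = \{\sum \alpha_i n_{F_i} : \alpha_i>0\}$, this rephrases preservation as the solvability of the linear system $\sum_{i=1}^k \alpha_i a_i = 0$ with all $\alpha_i>0$.

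An elementary sign analysis then yields that such a solution exists if and only if either all $a_i = 0$ (all $F_i$ vertical) or there exist indices $i,j$ with $a_i>0>a_j$ (a complementary pair among the $F_i$), since otherwise all nonzero $a_i$ share a single sign and no strictly positive combination can vanish. For the strict-versus-non-strict distinction, I would separately show that $\dim\pd(F) < \dim F$ precisely when the affine hull of $F$ contains the vertical direction $e_{d+1}$; since the tangent space of $F$ is the orthogonal complement of $\lin(n_{F_1},\dots,n_{F_k})$ in $\RR^{d+1}$, this is equivalent to $a_i = 0$ for every $i$, i.e., to every $F_i$ being vertical. Consequently, the ``complementary pair'' case automatically yields strict preservation, while the ``all vertical'' case yields preservation but not strict preservation, matching the statement.

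The main technical ingredient is the identification of $\relint(N_Q(F))$ with the set of strictly positive combinations of facet normals, after which everything reduces to elementary sign counting together with a short linear-algebra computation for the dimension drop. The only subtle point I would be careful about is distinguishing ``$\pd(F)$ is a face of $P$'' (some vertical vector lies in $N_Q(F)$) from the stronger ``$\inv{\pd}(\pd(F))\cap Q = F$'' (the vertical vector must lie in $\relint(N_Q(F))$), since the first on its own does not force preservation.
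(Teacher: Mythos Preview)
The paper does not actually prove this lemma; it is quoted from \cite{SanyalZiegler2010,Ziegler2004} and used as a black box, so there is no ``paper's own proof'' to compare against.

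Your argument is correct and is essentially the standard normal-fan proof that those references give. The key identifications --- that $F$ is preserved if and only if a vector with vanishing last coordinate lies in $\relint N_Q(F)$, and that $\relint\bigl(\cone(n_{F_1},\dots,n_{F_k})\bigr)$ consists exactly of the strictly positive combinations --- are right, and the sign analysis on the $a_i$ is the clean way to finish. Your handling of the dimension drop via $e_{d+1}\in(\lin\{n_{F_i}\})^\perp$ is also correct and yields the ``if and only if'' for the second clause (the paper states only the ``if'' direction, but both hold).

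Two small remarks. First, the identity $\relint(\cone(v_1,\dots,v_k))=\{\sum\alpha_iv_i:\alpha_i>0\}$ is true for arbitrary generators but is not entirely trivial when the $v_i$ are not extreme rays; you might want one sentence justifying the $\subseteq$ direction (e.g., if $p\in\relint$ then $(1+\varepsilon)p-\varepsilon\sum v_i$ is still in the cone for small $\varepsilon$, and rearranging gives strictly positive coefficients). Second, in your last paragraph you correctly flag the distinction between $N_Q(F)$ and $\relint N_Q(F)$; this is indeed the only place one can slip, and you have it right.
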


In the example of Figure~\ref{fig:projlemma}, the vertex $v$ and the edge~$F$ are preserved. Since $v$ is the intersection of a pair of complementary facets, it is strictly preserved (it is the preimage of a face of the same dimension). In contrast, $F$ is only contained in a vertical facet (itself), so it is preserved but not strictly preserved (it is the preimage of a face of different dimension). None of the other faces of the hexagon are preserved.

The following three technical lemmas will ease our proof of Proposition~\ref{prop:d+4projection}.

\begin{lemma}\label{lem:2uplo}
Any minimal $(d+1)$-extension $Q$ of a $d$-polytope $P$ has at least two upper and two lower facets.
\end{lemma}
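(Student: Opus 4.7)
My plan is to argue by contradiction. The statement is invariant under reflecting the last coordinate (which swaps upper and lower facets), so it is enough to prove that $Q$ has at least two upper facets. That $Q$ has at least one upper facet is automatic, since otherwise $Q$ would be unbounded in the $+e_{d+1}$ direction. I then suppose for contradiction that $Q$ has exactly one upper facet $F^+$, and aim to show $Q$ has strictly more facets than $P$, contradicting minimality: indeed $P$ is trivially an extension of itself, so any minimal extension $Q$ must have at most as many facets as $P$.

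The first key step is to identify $F^+$ with $P$ affinely via $\pi_d$. Since the outer normal of $F^+$ has positive last coordinate, its supporting hyperplane is not parallel to $\ker \pi_d$ and meets it in a single point; hence $\pi_d$ restricts to an affine injection on $F^+$. For surjectivity onto $P$: for each $y \in P$ the maximum $t^*(y) := \max\{t : (y,t) \in Q\}$ is attained at some boundary point $p = (y, t^*(y))$. Because moving from $p$ in the $+e_{d+1}$ direction immediately leaves $Q$, at least one facet of $Q$ through $p$ must have positive last-coordinate outer normal, and by the uniqueness of $F^+$ this facet is $F^+$. Hence $p \in F^+$, so $\pi_d(F^+) = P$ and $F^+ \cong P$ affinely, giving in particular that $F^+$ has the same number of facets as $P$.

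The second key step counts facets of $Q$. Each facet $e$ of $F^+$ is a $(d-1)$-face of the $(d+1)$-polytope $Q$, and, as a proper face, is contained in at least one facet $G_e$ of $Q$ distinct from $F^+$. I then claim the assignment $e \mapsto G_e$ is injective: if two distinct facets $e_1, e_2$ of $F^+$ lay in a common $G \neq F^+$, then $e_1 \cup e_2 \subseteq F^+ \cap G$, but two distinct facets of a $(d+1)$-polytope meet in a face of dimension at most $d-1$, so $F^+ \cap G$ is itself a single $(d-1)$-face, forcing $e_1 = e_2$. Therefore $Q$ has at least one facet per facet of $F^+$, plus $F^+$ itself, which is one more facet than $P$ has, producing the required contradiction.

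The main obstacle is precisely the injectivity in the second step: since $Q$ is not assumed to be simple, a $(d-1)$-face of $Q$ may sit in more than two facets, so $G_e$ is not canonical and the injectivity cannot be read off any "exactly two" combinatorial identity. It must instead be extracted from the dimension bound on intersections of distinct facets in a $(d+1)$-polytope. The first step, by contrast, is essentially bookkeeping once one spells out that the blocked upward direction at the maximizer $(y, t^*(y))$ necessarily certifies the presence of an upper facet through that point.
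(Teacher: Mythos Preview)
Your proof is correct and is a fully detailed version of the paper's two-sentence argument: if $F^+$ were the only upper facet then $P$ is affinely isomorphic to $F^+$, which, being a facet of $Q$, has at least one fewer facet than $Q$, contradicting minimality. One aside: your worry about non-simple $Q$ is unnecessary, since in \emph{any} polytope every ridge lies in exactly two facets, so $G_e$ is uniquely determined and the injectivity of $e \mapsto G_e$ is immediate without the extra dimension argument.
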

\begin{proof}
Since the normal vectors of $Q$ are positively spanning, $Q$ must have at least one upper and lower facet. If $F$ was the 
only upper/lower facet of $Q$, then $P$ would be affinely isomorphic to $F$, which has at least one facet less than~$Q$.
\end{proof}

We will work with projections of $(d+1)$-polytopes with $d+3$ facets, which by Lemma~\ref{lem:d+2char} are of the form $\pc$. In the next lemma we call a facet $F$ of a polytope $Q$ \defn{pyramidal} if it contains all the vertices but one; that is, if $Q$ is a pyramid with basis $F$.

\begin{lemma}\label{lem:allbut2}
If $Q=\pc$, $k+n+m=d+1$ is a minimal $(d+1)$-extension of a $d$-polytope $P=\pd(Q)$, then all but at most $2$ vertices of $Q$ are
strictly preserved under~$\pd$, 
and all preserved vertices are strictly preserved.

Moreover, if a vertex of $Q$ is not strictly preserved, then it must be the 
intersection of all the facets of $Q$ but $2$, which are the only $2$ upper or lower facets of $Q$, and are not pyramidal.
\end{lemma}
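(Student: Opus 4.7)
The argument is essentially a careful combination of Lemma~\ref{lem:projlem} (the Projection Lemma) with the explicit combinatorics of $Q = \pcar{k}{n}{m} = \Delta_{k-1} \star (\Delta_n \times \Delta_m)$. First I would write out the facets of $Q$ using the join description: there are $k$ facets obtained by omitting one apex of $\Delta_{k-1}$, together with the $n+1$ ``row-type'' facets $\Delta_{k-1} \star (F_i^{(n)} \times \Delta_m)$ and the $m+1$ ``column-type'' facets $\Delta_{k-1} \star (\Delta_n \times F_j^{(m)})$, where $F_i^{(n)}, F_j^{(m)}$ range over facets of $\Delta_n$ and $\Delta_m$. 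In total this gives $k + n + m + 2 = d+3$ facets. A direct incidence count then shows that each apex of $Q$ lies on exactly $d+2$ facets (missing only the apex-facet opposite to it), while each base vertex $(v_i, w_j)$ lies on exactly $d+1$ facets---missing precisely the row-type facet indexed by $i$ and the column-type facet indexed by $j$. In particular every base vertex is simple and is the intersection of the $d+1$ facets containing it.

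Next I would apply Lemma~\ref{lem:2uplo}: $Q$ has $u \geq 2$ upper and $l \geq 2$ lower facets, so $u+l \geq 4$ among the $d+3$ facets. For an apex, which is missing from a single facet, the containing facets cannot omit every upper and every lower facet simultaneously; hence an apex is always incident to at least one upper and at least one lower facet, so by Lemma~\ref{lem:projlem} every apex is strictly preserved. For a base vertex, strict preservation fails exactly when its two missed facets contain all upper or all lower facets; since $u, l \geq 2$ this forces $u = 2$ (resp.\ $l = 2$), and the two missed facets to coincide with the two upper (resp.\ lower) facets of $Q$. The same count rules out the ``preserved but not strictly preserved'' alternative: such a vertex would have to miss every non-vertical facet, impossible as a vertex of $Q$ misses at most two facets. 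This already proves that all preserved vertices are strictly preserved.

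Now I would bound the number of failing vertices. The pair of missed facets of a base vertex has the very specific form $\{\Delta_{k-1} \star (F_i^{(n)} \times \Delta_m),\ \Delta_{k-1} \star (\Delta_n \times F_j^{(m)})\}$---one row-type and one column-type---and this pair recovers the indices $(i,j)$ uniquely. Hence at most one base vertex can fail because $u = 2$, at most one because $l = 2$, and these two potential failing vertices cannot coincide (a single vertex misses only two facets in total, so cannot miss both two uppers and two lowers at once). For the non-pyramidality claim, the row-type facet $\Delta_{k-1} \star (F_i^{(n)} \times \Delta_m)$ omits the entire $i$-th row of $m+1$ base vertices, and the column-type facet omits $n+1$ base vertices; since $n, m \geq 1$ by Lemma~\ref{lem:d+2char}, each missed facet omits at least two vertices and hence is not pyramidal.

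The only real obstacle is purely notational: maintaining a clean dictionary between vertices of $Q = \Delta_{k-1} \star (\Delta_n \times \Delta_m)$ and the subsets of facets containing them. Once this incidence bookkeeping is in place, every clause of the lemma follows by combining Lemma~\ref{lem:projlem} with the inequalities $u, l \geq 2$ of Lemma~\ref{lem:2uplo}.
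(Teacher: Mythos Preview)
Your proposal is correct and follows essentially the same approach as the paper's own proof: both classify the vertices of $Q$ into apices (missing one facet) and base vertices (missing exactly one row-type and one column-type facet), then combine the Projection Lemma with the bound $u,l\geq 2$ from Lemma~\ref{lem:2uplo} to force any non-preserved vertex to be a base vertex whose two missed facets are precisely the two upper or the two lower facets. Your version is somewhat more explicit in the incidence bookkeeping and in arguing non-pyramidality via the count $m+1,n+1\geq 2$, whereas the paper simply notes that the two missed facets come ``one from each simplex, and hence non-pyramidal''; but the underlying argument is identical.
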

\begin{proof}
There are two kinds of vertices of $Q=\pc$: 
\begin{enumerate}
 \item $k$ apices of the pyramids, which are the intersection of all the facets but one, the corresponding pyramidal base;
 \item $(n+1)(m+1)$ vertices of the product, which are the intersection of all the facets but two, one arising from each simplex of the product.
\end{enumerate}

Recall that $Q$ has at least two upper and two lower facets by Lemma~\ref{lem:2uplo}. Hence, the Projection Lemma~\ref{lem:projlem} implies that every apex must be strictly preserved, because the defining 
facets must contain a complementary pair.  Moreover, again by the Projection Lemma~\ref{lem:projlem}, every preserved vertex must be also strictly preserved, since there are at least four non-vertical facets and every non-apex vertex belongs to all the facets but two.

If a vertex $v$ is not (strictly) preserved, then it must be a vertex of $\car{n}{m}$. It is the intersection of all 
facets but two, $F,G$, one coming from each simplex, and hence non-pyramidal. The only way that the facets incident to $v$ do not contain a complementary pair is that $F$ and $G$ are either the only two upper facets or the only two lower facets of $Q$. 
In particular, there are at most two such vertices, and there are two only when there are exactly two upper facets and two lower facets and all the remaining are vertical.
\end{proof}

We only need one final ingredient that uses the concept of \defn{equatorial ridge}, which is a ridge that is the intersection of two complementary facets of $Q$. For example, 
the vertex $v$ of Figure~\ref{fig:projlemma} is an equatorial ridge.
If $P=\pd(Q)$, the \defn{lift} of a face $F$ of $P$ is $\inv\pd(F)$, which is a face of $Q$. 

\begin{lemma}\label{lem:prolemrid}%
Let $Q$ be a $(d+1)$-extension of a $d$-polytope $P=\pd(Q)$. A face $F$ of $Q$ is a lift of a facet 
of $P$ if and only if it is either 
a vertical facet or
an equatorial ridge of $Q$.
\end{lemma}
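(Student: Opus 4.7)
The plan is to prove the two directions separately, using the Projection Lemma as the main tool and splitting into cases by the dimension of $F$. The key bridge is to note that a lift of a facet $F'$ of $P$ must be a face $F$ of $Q$ with $\pd(F)=F'$ and $\pd^{-1}(F')\cap Q=F$; in particular, $F$ is preserved by $\pd$, and since $\dim F' = d-1$, we have $\dim F \in \{d-1,d\}$, so $F$ is either a facet or a ridge of $Q$.

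For the $(\Leftarrow)$ direction, I would treat the two cases. If $F$ is a vertical facet, its supporting hyperplane has the form $H\times\RR$ for some hyperplane $H\subset\RR^d$, so the projection $\pd$ collapses the vertical fibres and maps the relative interior of $F$ homeomorphically onto a relatively open subset of $H$. The image $\pd(F)$ is therefore a $(d-1)$-dimensional face of $P=\pd(Q)$, hence a facet, and one readily checks $\pd^{-1}(\pd(F))\cap Q=F$. If $F$ is an equatorial ridge, then $F=F_1\cap F_2$ for complementary facets $F_1,F_2$; the Projection Lemma~\ref{lem:projlem} guarantees $F$ is strictly preserved, so $\pd(F)$ is a face of $P$ of dimension $d-1$, which is a facet, and strict preservation is exactly the condition $\pd^{-1}(\pd(F))\cap Q=F$.

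For the $(\Rightarrow)$ direction, assume $F$ is the lift of a facet $F'$ of $P$. If $\dim F=d$, then $F$ is a facet of $Q$ whose projection drops one dimension, so the kernel direction $(0,\dots,0,1)$ lies in the affine hull of $F$; equivalently, the outer normal of $F$ has zero last coordinate, so $F$ is vertical. If $\dim F = d-1$, then $\dim F=\dim\pd(F)$, so $F$ is strictly preserved by $\pd$; the Projection Lemma then implies that the facets of $Q$ containing $F$ include a complementary pair $F_1,F_2$. Since $F_1\cap F_2$ is a face of $Q$ of dimension at most $d-1$ that contains the $(d-1)$-face $F$, we get $F=F_1\cap F_2$, so $F$ is an equatorial ridge.

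The main obstacle, to the extent there is one, is making sure the definitional step relating "lift" to "strictly preserved" is handled cleanly in the ridge case, and ruling out degenerate behaviour (for instance, that $\pd(F)$ drops further in dimension) using only that $Q$ is an extension of $P$. Everything else is a direct application of the Projection Lemma together with the elementary observation about the normal vector of a vertical facet.
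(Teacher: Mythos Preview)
Your proof is correct and follows essentially the same approach as the paper: split by the dimension of $F$ (facet versus ridge of $Q$, since the projection has codimension~$1$) and then invoke the Projection Lemma~\ref{lem:projlem} to identify the two cases with ``vertical facet'' and ``equatorial ridge,'' respectively. The paper's proof is terser and only spells out the $(\Rightarrow)$ direction explicitly, leaving the converse to the Projection Lemma; your version is simply a more detailed unpacking of the same argument, with the $(\Leftarrow)$ direction made explicit and the vertical-facet case handled by a direct geometric observation rather than another appeal to Lemma~\ref{lem:projlem}.
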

\begin{proof}
Since the projection is of codimension $1$, the lift of a $(d-1)$-face of $P$ is either a not strictly preserved facet of $Q$ ($d$-dimensional), or a strictly preserved ridge ($(d-1)$-dimensional). The claim now follows from the Projection Lemma~\ref{lem:projlem}.
\end{proof}

We are ready to state and prove the main result of this section. %
\begin{proposition}\label{prop:d+4projection}
 Let $P$ be a $d$-polytope with $d+4$ vertices and at least $d+4$ facets with extension complexity $\xc(P)=d+3$.
 Then $P=\pd(Q)$, where $Q$ is a $(d+1)$-polytope combinatorially equivalent to $\pcar{d-2}{1}{2}$ and each vertex of $Q$ is strictly preserved under $\pi_d$.
 \end{proposition}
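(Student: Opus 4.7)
The strategy is a case analysis on the combinatorial type of any minimal extension~$Q$, followed by a vertex-deletion argument in the one troublesome case. By Lemma~\ref{lem:onlyonelift}, such a $Q$ is $(d+1)$-dimensional with exactly $d+3$ facets, so Lemma~\ref{lem:d+2char} lets us take $Q$ combinatorially equivalent to $\pc$ for some $k \geq 0$ and $n, m \geq 1$ with $k + n + m = d + 1$. Let $\beta \in \{0, 1, 2\}$ denote the number of non-strictly-preserved vertices of $Q$ (bounded by Lemma~\ref{lem:allbut2}). Each strictly-preserved vertex of $Q$ projects to a distinct vertex of~$P$, and when $\beta \leq 1$ no two bad vertices can share a projection (otherwise both would be non-preserved), so every vertex of $P$ comes from a unique strictly-preserved vertex of $Q$. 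Comparing vertex counts then gives $d + 4 = k + (n+1)(m+1) - \beta$; together with $k + n + m = d + 1$ this yields the key relation $nm = 2 + \beta$.

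The case $\beta = 2$ is killed by a facet count: Lemma~\ref{lem:allbut2} then forces exactly two upper and two lower facets with the remaining $d - 1$ vertical, and Lemma~\ref{lem:prolemrid} identifies the facets of $P$ with the vertical facets and equatorial ridges of $Q$, yielding at most $(d-1) + 2 \cdot 2 = d + 3$ of them and contradicting the hypothesis that $P$ has at least $d + 4$ facets. If instead $\beta = 0$, then $nm = 2$ forces $(n, m) \in \{(1, 2), (2, 1)\}$, so $Q \cong \pcar{d-2}{1}{2}$ already and every vertex is strictly preserved.

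The remaining case $\beta = 1$ is the main obstacle, since $nm = 3$ forces $Q \cong \pcar{d-3}{1}{3}$, which is not the target combinatorial type. The plan is to delete the bad vertex and replace $Q$ by a smaller extension. Let $v$ be the unique non-strictly-preserved vertex; by Lemma~\ref{lem:allbut2} it is a simple product vertex, and because $\beta = 1$ its image $\pi_d(v)$ cannot coincide with the projection of any other vertex of $Q$ nor be a vertex of $P$, so it must lie in $\conv(\pi_d(V(Q) \setminus \{v\}))$. Therefore $v$ is redundant for the projection, and $Q^{-} := \conv(V(Q) \setminus \{v\})$ still satisfies $\pi_d(Q^{-}) = P$.

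The most delicate step is to recognize $Q^-$ combinatorially. Writing $v = v_{11}$ after relabeling, the six product vertices $\{v_{ij} : i \in \{1,2\},\, j \in \{2,3,4\}\}$ form a prism $\Delta_1 \times \Delta_2$ (the facet of the $\Delta_3$-factor opposite $q_1$ being a $\Delta_2$), the remaining product vertex $v_{21}$ lies off the affine span of this prism, and adjoining $v_{21}$ together with the $k = d-3$ apices of $Q$ (each still outside the previous affine span) produces the desired iterated pyramid $\pyr_{d-2}(\Delta_1 \times \Delta_2) = \pcar{d-2}{1}{2}$; by the projective uniqueness of $\pc$ this identification does not depend on the chosen realization. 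Finally, the vertices of $Q^-$ are precisely the strictly-preserved vertices of~$Q$, whose projections are distinct vertices of $P$, so each of them remains strictly preserved under $\pi_d$.
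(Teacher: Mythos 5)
Your proof is correct and follows essentially the same route as the paper's: reduce to $Q\cong\pc$ via Lemmas~\ref{lem:onlyonelift} and~\ref{lem:d+2char}, count strictly preserved vertices to get $nm=2+\beta$, and resolve the case $nm=3$ by deleting the unique non-preserved (product) vertex to land on $\pcar{d-2}{1}{2}$. The one small deviation is that you dispose of the entire case $\beta=2$ --- both $\pcar{d-3}{2}{2}$ and $\pcar{d-4}{1}{4}$ --- uniformly with the facet count of Lemma~\ref{lem:prolemrid} (at most $d-1$ vertical facets plus $4$ equatorial ridges), whereas the paper applies that count only to $\pcar{d-3}{2}{2}$ and reduces $\pcar{d-4}{1}{4}$ by two successive vertex deletions; your variant is equally valid and slightly cleaner.
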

 \begin{proof}
 If $\xc(P)=d+3$, then $P=\pd(Q)$ for some $(d+1)$-polytope $Q\cong \pc$, where $k=d+1-n-m$, by Lemmas~\ref{lem:d+2char} and \ref{lem:onlyonelift}. Then
$Q$ has $nm+d+2$ vertices, of which all but $\ell\in \{0,1,2\}$ are strictly preserved, by Lemma~\ref{lem:allbut2}. Since all the vertices of $P$ are the image of strictly preserved
 vertices of $Q$ (again by Lemma~\ref{lem:allbut2}), we get that
 \[nm+d+2-\ell=d+4.\]
 The only solutions $(\ell,n,m)$ of this equation with $\ell\in \{0,1,2\}$ and $n,m\geq 1$ are: 
 $(0,1,2)$, $(1,1,3)$, $(2,1,4)$ and $(2,2,2)$.

We will show that all these cases are either impossible or can be reduced to the case $Q\cong \pcar{d-2}{1}{2}$, that is $(\ell,n,m)=(0,1,2)$.

If $Q\cong\pcar{d-3}{1}{3}$, then exactly one of its vertices, which we denote $v$, is not preserved. Observe that in this case $\pi(Q\setminus v)=\pi(Q)$ (where by $Q\setminus v$ we denote the polytope $\conv(\verts(Q)\setminus v))$). This vertex cannot be an apex since otherwise $P\cong Q\setminus v$, which has $d+2$ facets. Therefore, $v$ is a vertex of $\car{1}{3}$. 
But $\car{1}{3}\setminus v\cong \pcar{}{1}{2}$, and therefore this reduces to the case $Q\cong\pcar{d-2}{1}{2}$. We reduce analogously the case $Q\cong\pcar{d-4}{1}{4}$ to $Q\cong\pcar{d-3}{1}{3}$ and then to $Q\cong\pcar{d-2}{1}{2}$.

 It only remains the case $Q\cong  \pcar{d-3}{2}{2}$. In this case $\ell=2$ and we know by Lemma~\ref{lem:allbut2} that
 $Q$ has exactly two upper and two lower facets, which are non-pyramidal facets; %
 and the two non-preserved vertices are the intersection of the complements of 
 these two pairs of facets. Now we use Lemma~\ref{lem:prolemrid} to show that $P$ has at most $d+3$ facets, which contradicts our hypothesis. Indeed, the lift of a facet of $P$ can only be one of the $d-3$ pyramidal facets of $Q$, one of the $2$ non-pyramidal vertical facets, and one of the possible $4$ ridges arising as intersections of an upper and a lower facet of~$Q$. 
\end{proof}

We can do a more precise analysis of the combinatorial types of polytopes that arise this way.
For the first case, we need the following pair of folklore results concerning pyramids and hexagons.

\begin{lemma}\label{lem:pyr}
If $\pyr(P)$ is a pyramid with base $P$, then $\xc(\pyr(P))=\xc(P)+1.$
\end{lemma}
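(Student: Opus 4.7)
The plan is to prove the two inequalities $\xc(\pyr(P))\le\xc(P)+1$ and $\xc(\pyr(P))\ge\xc(P)+1$ separately, the first by constructing an extension and the second by a short face-counting argument.

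For the upper bound, I would begin with an optimal extension $\pi\colon Q\to P$ realizing $\xc(P)=\xc(Q)$, embed $Q$ in a hyperplane of a one-higher-dimensional ambient space, and choose an apex $v$ off that hyperplane to form $\pyr(Q)$. This pyramid has exactly $\xc(P)+1$ facets: the base $Q$ together with one new facet $\conv(F\cup\{v\})$ for each facet $F$ of $Q$. Extending $\pi$ to the unique affine map that sends $v$ to the apex of $\pyr(P)$ yields a surjective affine projection $\pyr(Q)\to\pyr(P)$, which witnesses $\xc(\pyr(P))\le\xc(P)+1$.

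For the lower bound, I would take an arbitrary extension $\pi\colon Q\to\pyr(P)$ and consider the base facet $P'$ of $\pyr(P)$, i.e.\ the unique facet of $\pyr(P)$ not containing the apex, which is a copy of $P$. Because the preimage of a proper face under an affine surjection is again a proper face, $F:=\pi^{-1}(P')\cap Q$ is a proper face of $Q$ with $\pi(F)=P'$; in particular $F$ itself projects onto $P$, so $F$ is an extension of $P$ and therefore has at least $\xc(P)$ facets. I would then recall that every facet of $F$ has the form $F\cap G$ for some facet $G$ of $Q$ not containing $F$, so $Q$ must have at least $\xc(P)$ facets among those not containing $F$; since $F$ is a proper face of $Q$, at least one further facet of $Q$ does contain $F$. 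Adding these disjoint contributions and minimizing over all extensions $Q$ yields $\xc(\pyr(P))\ge\xc(P)+1$.

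The argument is essentially routine; the only point requiring care is the standard polytopal bookkeeping that preimages of faces under affine projections are faces and that every facet of a proper face $F\subsetneq Q$ arises as $F\cap G$ for at least one facet $G$ of $Q$ not containing $F$. With these two observations in hand the two bounds combine to give the desired equality $\xc(\pyr(P))=\xc(P)+1$.
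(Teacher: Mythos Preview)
The paper does not actually prove this lemma: it is stated as a ``folklore result'' and used without proof. Your argument is a correct and standard way to establish it. Both inequalities are handled cleanly: the upper bound via the obvious pyramid-over-an-extension construction, and the lower bound by pulling back the base facet and counting facets of $Q$ that do and do not contain the resulting face $F$. The two polytopal facts you flag as needing care (that preimages of faces under affine surjections are faces, and that each facet of a face $F$ of $Q$ is cut out by some facet of $Q$ not containing $F$) are indeed the only nontrivial ingredients, and both are standard; the injectivity needed to conclude that $Q$ has at least $\xc(P)$ facets not containing $F$ follows because distinct facets of $F$ cannot arise as $F\cap G$ for the same $G$.
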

\begin{lemma}[{\cite[Prop.~4]{PadrolPfeifle2015}}]\label{lem:desarguian}
For a hexagon $P$, the following are equivalent:
\begin{enumerate}[label={(\roman*)}]
\item $\xc(P)=5$,
\item $P=\pi(Q)$ for $Q\cong\car{1}{2}$, and
\item $P$ is \defn{Desarguian}; that is, the lines $p_0\wedge p_1$, $p_5\wedge p_2$ and $p_3\wedge p_4$ are concurrent for some cyclic labeling of its vertices $p_0,\dots,p_5$.
\end{enumerate}
\end{lemma}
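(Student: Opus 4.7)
The plan is to establish the equivalences in two parts: (i) $\Leftrightarrow$ (ii) by a vertex-count classification of five-facet extensions, and (ii) $\Leftrightarrow$ (iii) via Desargues' theorem.

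For (i) $\Leftrightarrow$ (ii): the hexagon $P$ has six facets, so $\xc(P) \leq 6$; conversely, any polytope with at most four facets is a simplex of dimension $\leq 3$, hence has at most four vertices, and so cannot project onto a hexagon, giving $\xc(P) \geq 5$. When $\xc(P) = 5$, the extension $Q$ has exactly five facets: the candidates by dimension are $\Delta_4$, the pentagon, and the square pyramid (each with $5$ vertices), together with the triangular prism $\Delta_1 \times \Delta_2$ ($6$ vertices), and only the prism has enough vertices to project onto a hexagon. Conversely, $P = \pi(Q)$ with $Q \cong \Delta_1 \times \Delta_2$ gives $\xc(P) \leq 5$, matching the lower bound.

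For (ii) $\Rightarrow$ (iii): write the triangular facets of $Q$ as $T_1 = \{q_0, q_5, q_3\}$ and $T_2 = \{q_1, q_2, q_4\}$ with vertex pairing $q_0 \leftrightarrow q_1$, $q_5 \leftrightarrow q_2$, $q_3 \leftrightarrow q_4$ from the product structure, and set $p_i = \pi(q_i)$. The two triangles span distinct planes $H_1, H_2 \subset \RR^3$ meeting in a line $L$, and flatness of the three side quadrilateral facets forces corresponding sides of $T_1$ and $T_2$ to be coplanar in $\RR^3$, so their intersections all lie on $L$. This places $T_1, T_2$ in axial perspective; Desargues' theorem then yields a common point $\hat C \in \RR^3$ (possibly at infinity) through which the three ``vertical'' edges $q_0q_1, q_5q_2, q_3q_4$ all pass. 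Applying $\pi$, the three lines $p_0p_1, p_5p_2, p_3p_4$ concur at $\pi(\hat C)$; Lemma~\ref{lem:prolemrid} is used to confirm that the induced labeling is compatible with the cyclic order on the hexagon, establishing the Desarguian condition.

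For (iii) $\Rightarrow$ (ii): given a Desarguian hexagon with concurrence point $C$, I would invert the construction. The slickest route is to apply a projective transformation of $\RR^2$ sending $C$ to infinity in some direction $u$, so the three lines become parallel; lift the transformed hexagon to a combinatorial prism in $\RR^3$ whose three vertical edges are parallel to $u$; and then transfer back via the inverse projective
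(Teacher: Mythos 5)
First, a remark on scope: the paper does not actually prove this lemma --- it is imported from \cite[Prop.~4]{PadrolPfeifle2015} --- so your argument can only be judged on its own terms. Your (i)$\Leftrightarrow$(ii) is correct and complete: the enumeration of polytopes with exactly five facets ($\Delta_4$, pentagon, square pyramid, triangular prism) together with $|\verts(P)|\leq|\verts(Q)|$ does exactly what is needed. (A small slip: a polytope with four facets need not be a simplex --- a quadrilateral is not --- but it still has at most four vertices, which is all you use.) In (ii)$\Rightarrow$(iii) the core idea is sound and standard: the three ``vertical'' edges of a combinatorial prism in $\RR^3$ are pairwise coplanar via the planar quadrilateral facets, hence concurrent (possibly at a point at infinity), and concurrence survives the projection. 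However, your explicit partition into triangular facets is wrong: with the pairing $q_0\leftrightarrow q_1$, $q_5\leftrightarrow q_2$, $q_3\leftrightarrow q_4$ and a cyclic labeling of the hexagon, the two triangles must occupy consecutive arcs, namely $\{q_0,q_4,q_5\}$ and $\{q_1,q_2,q_3\}$, not $\{q_0,q_5,q_3\}$ and $\{q_1,q_2,q_4\}$; your set $\{q_0,q_5,q_3\}$ is not a facet of $Q$ (with your partition the hexagon edge $p_2p_3$ would not lift to an edge of $Q$, contradicting Lemma~\ref{lem:prolemrid}). Since the pairing --- and hence the three lines $p_0p_1$, $p_5p_2$, $p_3p_4$ --- is nonetheless the right one, this is a repairable bookkeeping error rather than a fatal flaw.

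The genuine gap is (iii)$\Rightarrow$(ii), which is the one direction that requires an actual construction and which your text leaves unfinished. After sending the concurrence point to infinity so that $p_0p_1$, $p_5p_2$, $p_3p_4$ are all parallel to a direction $u$, you still must produce the lift: writing $p_1=p_0+t_0u$, $p_2=p_5+t_5u$, $p_3=p_4+t_4u$, lift $p_0,p_4,p_5$ to height $0$ and $p_1,p_2,p_3$ to heights $st_0$, $st_5$, $st_4$ for a single $s>0$; this common proportionality factor is precisely what makes the three quadrilateral faces planar, and one must also check that $t_0,t_4,t_5$ all have the same sign (which follows from convexity, since the diagonal $p_5p_2$ is parallel to the two edges and separates them) so that the resulting convex hull really is a $3$-polytope combinatorially equivalent to $\car{1}{2}$ projecting back onto the hexagon. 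Without this the implication is only an announcement of intent. You should also justify undoing the initial projective normalization: either observe that both $\xc$ and ``being a projection of a prism'' are invariant under admissible projective transformations, or carry out the lift projectively without moving $C$ to infinity.
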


\begin{figure}[htpb]
\centering 
\includegraphics[width=.45\linewidth]{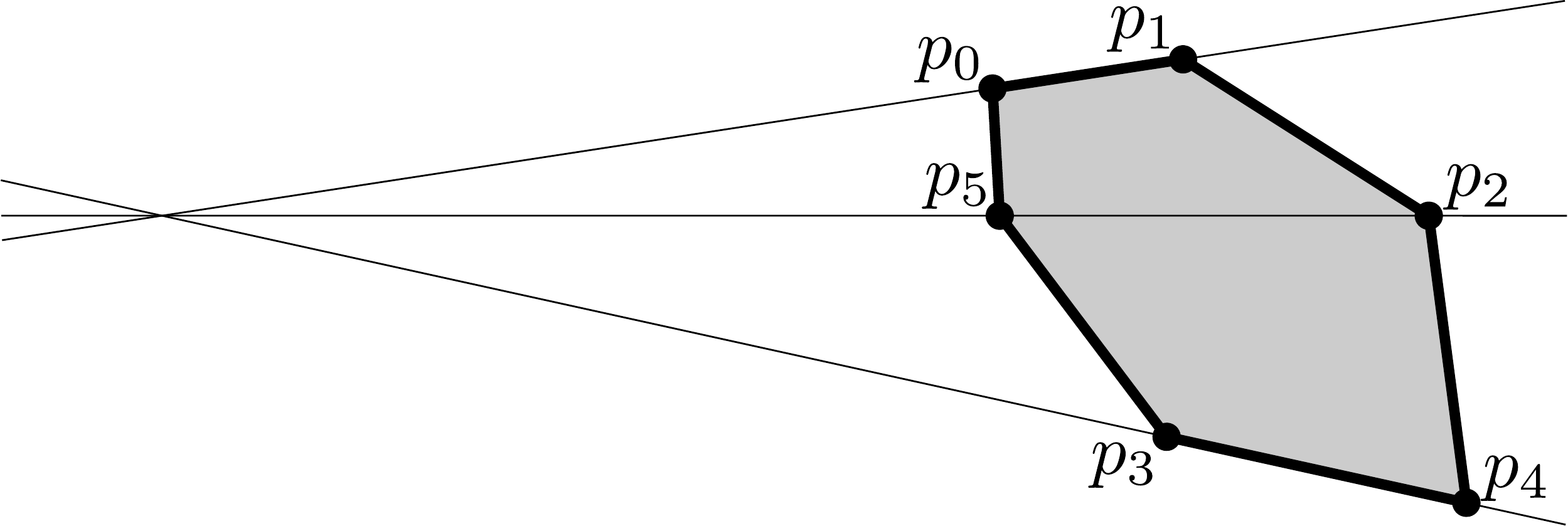}

\caption{A Desarguian hexagon.}
\end{figure}

\begin{proposition}\label{prop:d+4classification}
 A $d$-polytope $P$ with $d+4$ vertices and at least $d+4$ facets has
 extension complexity $\xc(P)=d+3$ if and only if either it
  \begin{enumerate}[label=(\arabic*)]
 \item\label{it:deshex} is a $(d-2)$-fold pyramid over a Desarguian hexagon~$H$, or
 \item\label{it:tripri}  has a subset of $6$ vertices forming a triangular prism. 
\end{enumerate}
\end{proposition}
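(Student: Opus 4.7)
The plan is to combine Proposition~\ref{prop:d+4projection} with a dichotomy on the image of the prism sub-face of the extending polytope $Q$, handling the converse via Lemmas~\ref{lem:pyr},~\ref{lem:desarguian} and an explicit lifting construction.

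For the forward implication, suppose $\xc(P)=d+3$. Proposition~\ref{prop:d+4projection} furnishes a codimension-one extension $P=\pd(Q)$ with $Q\cong\pcar{d-2}{1}{2}$ in which every vertex is strictly preserved. Write $u_1,\ldots,u_6$ for the vertices of the distinguished prism sub-face of $Q$ and $a_1,\ldots,a_{d-2}$ for the apices, and put $V:=\conv(\pd(u_1),\ldots,\pd(u_6))$. Since the $\pd(u_i)$ are distinct vertices of $P$, they are extremal in every convex set containing them; hence all six are vertices of $V$, and $\dim V\in\{2,3\}$. If $\dim V=3$ then $\pd$ is injective on the prism sub-face, $V$ is affinely a triangular prism sitting on six vertices of $P$, and case~\ref{it:tripri} holds.

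The more delicate subcase is $\dim V=2$, where $V$ is a hexagon. Lemma~\ref{lem:desarguian} shows it is Desarguian, as it arises as the projection of a triangular prism, so it only remains to establish $P=\pyr^{d-2}(V)$. To this end I plan to apply the Projection Lemma~\ref{lem:projlem}: the prism is a preserved-but-not-strictly-preserved $3$-face of $Q$, so all facets of $Q$ containing it must be vertical. A direct face inventory identifies these as exactly the $d-2$ apex-free facets $F_1,\ldots,F_{d-2}$, where $F_i$ is the unique facet of $Q$ missing only~$a_i$. For any $S\subseteq\{1,\ldots,d-2\}$, the intersection $\bigcap_{j\notin S}F_j$ equals the iterated pyramid $\pyr^{|S|}(\text{prism})$ as a face of $Q$, is again contained only in vertical facets, and hence the Projection Lemma makes its image a face of $P$ of codimension $d-2-|S|$ whose vertex set is $V\cup\{\pd(a_j):j\in S\}$. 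Running over all $S$ realizes the full face lattice of $\pyr^{d-2}(V)$ inside $P$, yielding case~\ref{it:deshex}.

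For the converse, case~\ref{it:deshex} is immediate: Lemmas~\ref{lem:pyr} and~\ref{lem:desarguian} give $\xc(\pyr^{d-2}(H))=\xc(H)+(d-2)=5+(d-2)=d+3$. For case~\ref{it:tripri}, let $T$ be the prism spanned by the six distinguished vertices and $w_1,\ldots,w_{d-2}$ the remaining ones. Embedding $P$ in $\{x_{d+1}=0\}\subset\RR^{d+1}$ and lifting each $w_j$ to $(w_j,t_j)$ for generic positive heights produces a polytope $Q$ with $\pd(Q)=P$; genericity ensures that each new apex lies outside the affine hull of the preceding iterated pyramid, so $Q\cong\pcar{d-2}{1}{2}$ combinatorially and therefore has $d+3$ facets. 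This delivers $\xc(P)\le d+3$, and combined with the lower bound from Lemma~\ref{lem:xc=d+2} and the assumption of at least $d+4$ facets one obtains $\xc(P)=d+3$.

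I expect the main obstacle to be the pyramid-recovery step in the $\dim V=2$ subcase, where one must verify that the face images produced by the Projection Lemma really do assemble into the full face lattice of $\pyr^{d-2}(V)$, with no unexpected coincidences collapsing these faces or producing extra ones. The converse for case~\ref{it:tripri} reduces to a routine codimension-one genericity argument on the lifting heights.
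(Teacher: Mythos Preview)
Your overall strategy coincides with the paper's: invoke Proposition~\ref{prop:d+4projection} and split on $\dim V\in\{2,3\}$. The $\dim V=3$ case and the converse for~\ref{it:deshex} are handled identically.

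The two places where you diverge are both workable but substantially more elaborate than needed. For $\dim V=2$, the paper bypasses the Projection Lemma entirely with a one-line dimension count: since $\dim V=2$ and $\dim P=d$, the remaining $d-2$ vertices $\pd(a_j)$ must each raise the dimension by exactly one, whence $P=\pyr_{d-2}(V)$ automatically. Your Projection-Lemma route can be made to work, but the assertion that ``the prism is a preserved-but-not-strictly-preserved $3$-face'' is not yet justified, and that is where the real content lies---not the assembly step you flagged. What you need is the observation that $\dim V=2<3$ forces the projection direction $e_{d+1}$ into the affine hull of the prism, hence into the affine hull of every facet $F_i$ containing the prism, so each such $F_i$ is vertical; only then does Lemma~\ref{lem:projlem} yield that the prism (and each $\bigcap_{j\notin S}F_j$) is a preserved face. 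Once this is in place, the face-lattice reconstruction is routine.

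For the converse of~\ref{it:tripri}, the paper again uses a one-liner: adding a single vertex to a polytope raises its extension complexity by at most one (take a pyramid over any minimal extension, with apex projecting to the new vertex), so starting from the triangular prism with $\xc=5$ and adjoining the $d-2$ remaining vertices gives $\xc(P)\le d+3$ directly. Your explicit lifting is correct---once $Q$ is $(d+1)$-dimensional, removing any lifted $(w_j,t_j)$ leaves the prism plus $d-3$ points, which span at most dimension $d$, so each lifted point is indeed an apex---but it does more work than the statement requires.
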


\begin{proof}
According to Proposition~\ref{prop:d+4projection}, $P=\pd(Q)$ for some $(d+1)$-polytope $Q\cong \pcar{d-2}{1}{2}$ that has
$d+4$ vertices: six of which, $q_1,\dots, q_6$, span a triangular prism $\conv\{q_1,\dots,q_6\}\cong\car{1}{2}$ and the
remaining $d-2$ are apices of pyramids.
The polytope $P=\pd(Q)$ has also $d+4$ vertices. We denote  $b_i=\pd(q_i)$ and $B=\conv\{b_1,\dots,b_6\}$. 

If $\dim(B)=2$, then each of the remaining $d-2$ vertices must contribute to increase the dimension by one, and hence $P$ is a $(d-2)$-fold pyramid over a the hexagon $B$.  By Lemma~\ref{lem:desarguian}, a hexagon is a projection of $\car{1}{2}$ if and only if it is Desarguian. So, with the aid of Lemma~\ref{lem:pyr}, we recover~\ref{it:deshex}.

If $\dim(B)=3$, then $B\cong \car{1}{2}$, because $\pd$ restricts to an affine isomorphism on the triangular prism. We get therefore~\ref{it:tripri}.

The reciprocal statements, that polytopes fulfilling \ref{it:deshex} or \ref{it:tripri} have extension complexity at most $d+3$, are direct because adding a vertex can only increase extension complexity by~$1$.
\end{proof}

The second point can be developed even further. For that, we need some extra notation. The \defn{join} of two polytopes $P$ and $Q$, denoted $P \join Q$, which consists in taking the convex hull 
of a copy of $P$ and $Q$ placed in skew affine subspaces (see \cite[Sec.~15.1.3]{HenkRichterGebertZiegler1997}). A
\defn{Lawrence extension} of a $d$-polytope $P$ at a point $p\in\RR^d$ is 
$$\conv(P\cup (p+e_{d+1})\cup (p+2e_{d+1}))\subset \RR^{d+1},$$ and a
\defn{one-point-suspension} of $P$ at $p$ is $$\conv(P\cup (p+e_{d+1})\cup (p-e_{d+1}))\subset \RR^{d+1}$$ (cf. \cite[Sec.~4.2.5 \& Sec.~5.5.3]{DeLoeraRambauSantos2010}).

\begin{lemma}\label{lem:d+4classification}
Any $d$-polytope $P$ with $d+4$ vertices six of which span a triangular prism is an iterated pyramid over either
\begin{enumerate}[label=(\arabic*)]
 \item\label{it:joinprod} the join of $\car{1}{2}$ with a direct sum of simplices, or
  \item\label{it:suspLaw} a polytope obtained by a sequence of {one-point-suspensions} and {Lawrence extensions} on a (projective) point added to $\pcar{k}{1}{2}$. In this case, $P=\conv(Q\cup p)$, where $Q\cong \pcar{k}{1}{2}$ is the convex hull of the vertices of $P$ distinct from~$p$. 
\end{enumerate}
\end{lemma}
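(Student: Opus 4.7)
From (the proof of) Proposition~\ref{prop:d+4classification}, we may write $P = \conv(Q \cup \{p\})$, where $Q = \conv(T \cup \{v_1,\dots,v_{d-3}\}) \cong \pcar{d-3}{1}{2}$ is an iterated pyramid over the triangular prism $T = \conv\{q_1,\dots,q_6\}$ with apices $v_1,\dots,v_{d-3}$, and $p$ is the remaining vertex, lying in $\aff(Q) = \aff(P)$. The plan is to classify $P$ by examining the affine position of $p$: express $p$ as an affine combination
\[
p \;=\; \sum_{i=1}^6 \alpha_i\, q_i \;+\; \sum_{j=1}^{d-3} \beta_j\, v_j,
\qquad \sum_i \alpha_i + \sum_j \beta_j = 1,
\]
and split into two cases according to whether the prism coefficients $\alpha_i$ all vanish.

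If $\alpha_i = 0$ for all $i$, then $p$ lies in $\aff(A)$ where $A = \{v_1,\dots,v_{d-3}\}$. The apices are affinely independent (each was placed outside the affine span of the previously chosen vertices), so $\aff(A)$ is a $(d-4)$-flat skew to $\aff(T)$, and $\conv(A \cup \{p\})$ is a $(d-4)$-polytope with $d-2$ vertices. By Lemma~\ref{lem:d+2char} it is projectively equivalent to $\psum{k}{n}{m}$ for some $k+n+m = d-4$ with $n,m \ge 1$. Since $\aff(T)$ and $\aff(A\cup\{p\})$ are skew affine flats whose join equals $\aff(P)$, this yields $P = T \join \psum{k}{n}{m}$; the identity $\pyr_k(X) \join Y = \pyr_k(X \join Y)$ then rewrites it as $\pyr_k(\car{1}{2} \join (\Delta_n \oplus \Delta_m))$, which is case~\ref{it:joinprod}.

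If instead some $\alpha_i \neq 0$, I would argue by induction on the number $d-3$ of apices. In the base case $Q = \car{1}{2}$ (i.e.\ $d = 3$), the polytope $P = \conv(\car{1}{2} \cup \{p\})$ is tautologically of the form in case~\ref{it:suspLaw} with an empty sequence of operations. For the inductive step, either some apex $v_j$ of $Q$ remains an apex of $P$---in which case $P = \pyr(P')$ for the $(d-1)$-polytope $P' = \conv\bigl(\conv(T \cup (\{v_1,\dots,v_{d-3}\} \setminus \{v_j\})) \cup \{p\}\bigr)$, and induction applies directly---or no apex of $Q$ survives as an apex of $P$, in which case some $v_j$ is ``entangled'' with $p$ through the unique affine dependency among $V(P)$: the pair $\{v_j, p\}$ then encodes exactly one step at the projective-point level, namely a one-point-suspension when the two have opposite signs in this dependency (a symmetric pattern straddling the lower-dimensional projective point), or a Lawrence extension when they have the same sign.

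The main difficulty is this second case: translating each admissible affine position of $p$ into a specific sequence of pyramid, one-point-suspension, and Lawrence-extension operations, and verifying that the induction closes. The key enabling observation is that pyramid operations commute (up to combinatorial equivalence) with both one-point-suspensions and Lawrence extensions, so after peeling off the pure-pyramid steps the remaining operations act only on the ``current'' projective point descended from $p$, and they are uniquely determined by the sign patterns of the affine dependencies encountered at each level of the induction.
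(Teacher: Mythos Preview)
Your overall strategy---singling out one vertex $p$ and writing the other $d+3$ as an iterated pyramid $Q\cong\pcar{d-3}{1}{2}$---is different from the paper's and can be made to work, but your write-up has two real gaps.

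First, the opening decomposition is not justified. Proposition~\ref{prop:d+4classification} does not give you $P=\conv(Q\cup\{p\})$ with $Q\cong\pcar{d-3}{1}{2}$: there the symbol $Q$ denotes a $(d+1)$-dimensional extension, and nothing controls how the $d-2$ projected apices sit relative to the prism inside~$\RR^d$. What you actually need is the elementary observation that among the $d-2$ non-prism vertices at least one lies in the affine span of the remaining $d+3$ vertices (otherwise the $d+4$ points would span dimension $3+(d-2)>d$); removing it leaves $d+3$ points that still span $\RR^d$, and a dimension count then forces the remaining $d-3$ non-prism points to be affinely independent and skew to $\aff(T)$, hence apices. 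The paper avoids this preliminary step entirely by splitting instead on whether $\dim\conv\{a_1,\dots,a_{d-2}\}$ equals $d-4$ or $d-3$.

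Second, ``the unique affine dependency among $V(P)$'' is incorrect: $d+4$ points spanning $\RR^d$ have a $3$-dimensional space of affine dependences, two of which are supported on the prism. You presumably mean the essentially unique dependence that involves~$p$; but then your sign criterion for distinguishing a Lawrence extension from a one-point-suspension, and the check that the reduced configuration again satisfies the inductive hypothesis (with the new projective point playing the role of~$p$), are only asserted, not argued. The paper carries out the analogous step explicitly: it takes the hyperplane $H=\aff(\{b_i\}\cup\{a_1,\dots,a_{d-4}\})$, shows $a_{d-3},a_{d-2}\notin H$ (else $P$ would be a pyramid), intersects the line $a_{d-3}\wedge a_{d-2}$ with $H$ to get the new point, and reads off one-point-suspension versus Lawrence extension from whether $a_{d-3}$ and $a_{d-2}$ lie on opposite or the same side of~$H$. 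Your last paragraph gestures at exactly this, but never pins down which $v_j$ to peel off or why the recursion closes.
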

\begin{proof}
With the notation of the previous theorem, we denote $b_1,\dots, b_6$ the vertices of $P$ spanning the triangular prism and by $a_1,\dots, a_{d-2}$ the remaining ones. Set $A=\conv\{a_1,\dots,a_{d-2}\}$ and $B=\conv\{b_1,\dots,b_6\}$. Since $d=\dim(P)\leq \dim(A)+\dim(B)+1$, $A$ is either $d-3$ or $d-4$ dimensional.

If $\dim(A)=d-4$, then $A$ must be combinatorially equivalent to an iterated pyramid over a product of simplices by Lemma~\ref{lem:d+2char}, because it has $d-2$ vertices. Moreover, since $\dim(B)=3$, $A$ and $B$ must lie in skew affine subspaces, and hence $P=A\join B$.  Therefore, there are $k',n',m'$ with $k'+n'+m '=d-4$ such that $P\cong \pyr_{k'}((\car{1}{2})\join(\dsum{n'}{m'}))$.

Otherwise, $\dim(A)=d-3$. We can assume without loss of generality that $P$ is not a pyramid, which is equivalent to assume that no $a_i$ lies in all the facets of $P$ but one.
For the sake of the induction, we remove the condition that the all the points $a_i$ are vertices of $P$. Hence, we consider $d$-dimensional configurations of points $\{a_1,\dots,a_{d-2}, b_1,\dots,b_6\}$ such that
\begin{itemize}
\item $\conv(b_1,\dots,b_6)\cong\car{1}{2}$,
\item $\conv\{a_1,\dots,a_{d-2}\}\cong \Delta_{d-3}$, and
\item there is no hyperplane containing all the points but one.
\end{itemize}

Then we claim that for $d\geq 4$, any such a configuration can be obtained from one in one dimension less by the operation of one-point-suspension or Lawrence extension.

Consider the affine hull of $\{b_1,\dots,b_6,a_1,\dots,a_{d-4}\}$. It is a hyperplane $H$ because otherwise the points $\{b_1,\dots,b_6,a_1,\dots,a_{d-4}, a_{d-3}\}$ would be contained in a common hyperplane  and $P$ would be a pyramid. For the same reason, neither $a_{d-2}$ or $a_{d-3}$ belong to $H$.
Let $a_{d-3}'$ be the (projective) point of intersection of the line ${a_{d-2}\wedge a_{d-3}}$ with $H$. If $a_{d-2}$ and $a_{d-3}$ lie at the same side of $H$, then $\{b_1,\dots,b_6,a_1,\dots,a_{d-2}\}$
is the Lawrence extension of $\{b_1,\dots,b_6,a_1,\dots,a_{d-4}\}$ at $a_{d-3}'$. Otherwise, 
if $H$ separates $a_{d-2}$ and $a_{d-3}$, then it is its one-point-suspension at $a_{d-3}'$.
\end{proof}

To finish this subsection, we show that among all possible combinatorial types of $d$-polytopes with $d+4$ vertices, the vast majority does not have any realization with extension complexity smaller than $d+4$.
\begin{corollary}\label{cor:number}
The number of (unlabeled) combinatorial types of $d$-polytopes with $d+4$ vertices that have a realization $P$ with $\xc(P)=d+3$ is $\theta(d^ 2)$.
\end{corollary}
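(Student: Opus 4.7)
The plan is to count the combinatorial types family by family, using the classification from Theorem~\ref{thm:main}, Proposition~\ref{prop:d+4classification} and Lemma~\ref{lem:d+4classification}. The target $\Theta(d^2)$ follows once I show that every subcase contributes at most $O(d^2)$ types and at least one subcase contributes $\Omega(d^2)$.

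Several subcases contribute only $O(1)$ types per dimension and can be handled immediately. Case~\ref{it:cased+2} is the single type $\pcar{d-4}{1}{3}$ (and is in fact excluded here, since $\xc=d+2$ for all its realizations). Case~\ref{it:cased+3} consists of iterated pyramids over the eight sporadic polytopes of Proposition~\ref{prop:d+4verticesd+3facets}, giving $O(1)$ types per~$d$. Case~\ref{it:casedesarguian} produces a single type per dimension, since the hexagon is combinatorially unique and iterated pyramids are combinatorially rigid.

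The lower bound, as well as the bulk of the upper bound, come from case~\ref{it:joinsumprod}. The polytopes $\pyr_k(\car{1}{2})\join(\dsum{n}{m})$ are parameterized by triples $(k,\{n,m\})$ with $k+n+m=d-4$, $k\geq 0$ and $1\leq n\leq m$. I would argue that distinct triples give distinct combinatorial types by recovering $\{n,m\}$ from the face structure of the direct-sum summand and $k$ from the apical vertices incident to all but one facet. A direct count of such triples yields $\sum_{k=0}^{d-6}\lfloor (d-4-k)/2\rfloor=\Theta(d^2)$, simultaneously establishing the lower bound $\Omega(d^2)$ and absorbing the upper bound for this case.

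The hard part will be case~\ref{it:pyrsuspLaw}: polytopes $\pyr_r(\conv(\pcar{k}{1}{2}\cup p))$ built by iterating one-point-suspensions and Lawrence extensions on the extra point $p$ added to $\pcar{k}{1}{2}$. A naive enumeration of the length-$(d-3)$ operation sequences would give exponentially many candidates, so I plan to exploit the inductive reduction inside the proof of Lemma~\ref{lem:d+4classification}. At each reduction step, the requirement that the prism vertices $b_1,\dots,b_6$ and the successive apices remain actual vertices of the result, combined with the symmetries of $\pcar{k}{1}{2}$, pins the combinatorial type down to the pair $(k,r)$ with $k+r=d-3$ together with one further discrete parameter of size $O(k)$ (for instance, the number of one-point-suspensions among the non-pyramid operations). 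Summing $O(k)$ over $k\in\{0,\dots,d-3\}$ gives $O(d^2)$, and combining this with the bound from case~\ref{it:joinsumprod} yields the claimed $\Theta(d^2)$.
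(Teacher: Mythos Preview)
Your case analysis matches the paper's, and your treatment of the constant-size families and of case~\ref{it:joinsumprod} is fine (the paper simply cites \cite[Theorem~6.1.4]{Gruenbaum} for the count $\lfloor(d-4)^2/4\rfloor$, but your direct argument works too and gives the $\Omega(d^2)$ lower bound).

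The gap is in case~\ref{it:pyrsuspLaw}. You correctly flag that a naive count of operation sequences is exponential, but your proposed fix is not an argument: you assert that ``the symmetries of $\pcar{k}{1}{2}$'' pin the type down to a pair $(k,r)$ plus one parameter of size $O(k)$, without saying why the \emph{order} of the operations is irrelevant or why different orders cannot produce non-isomorphic polytopes. You also omit any dependence on the position of the extra projective point $p$ relative to $\car{1}{2}$; this is a genuine combinatorial parameter (there are several single-element extensions of the oriented matroid of $\car{1}{2}$), even if it only contributes a constant factor.

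The paper closes this gap with two ingredients you are missing. First, it passes to oriented matroids, a finer invariant than the combinatorial type, so that an upper bound on oriented matroids is automatically an upper bound on combinatorial types. Second, and crucially, it uses that \emph{pyramid, one-point-suspension, and Lawrence extension commute} at the oriented-matroid level. This commutativity is exactly what collapses the exponential number of sequences to the polynomial number of multisets: the resulting oriented matroid depends only on the initial placement of the extra point (a constant $C$) and on the triple $(\ell,o,p)$ recording how many times each operation was used, with $\ell+o+p=d-3$. That gives $C\binom{d-1}{2}=O(d^2)$ oriented matroids, hence $O(d^2)$ combinatorial types. Your sketch gestures toward the right shape of the answer but never states or uses commutativity, which is the actual content of this step.
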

\begin{proof}
As we will see in Proposition~\ref{prop:d+4verticesd+3facets}, for each $d$ there are at most $9$ combinatorial types of $d$-polytopes with $d+4$ vertices and at most $d+3$ facets. The remaining polytopes with $d+4$ vertices and extension complexity $d+3$ belong hence to one of the two families treated in Lemma~\ref{lem:d+4classification}.

There are exactly $\ffloor{(d-4)^2}{4}$ distinct combinatorial types of polytopes of the form $\pyr_{k'}((\car{1}{2})\join(\dsum{n'}{m'}))$ with $k'+n'+m'+2=d-4$ (follows from \cite[Theorem~6.1.4]{Gruenbaum}).

It only remains to count those arising as Lawrence extensions and one-point-suspensions.
Instead of counting combinatorial types of polytopes, we count possible oriented matroids with this combinatorial type of convex hull, which is a finer combinatorial invariant~\cite{RichterGebertZiegler1997}.
The oriented matroid of a Lawrence extension or a one-point-suspension depends only on the oriented matroid of the original point configuration and the point where it is applied.

Let $C$ be the number of combinatorially different ways to add a projective point to $\car{1}{2}$ (i.e. the number of different single element extensions as an oriented matroid). Since Lawrence extensions, one-point-suspensions and taking pyramids commute, we only need to know the placement of the original extra point and how many times we performed each operation. This is the number of triples $(\ell,o,p)$ of nonnegative integers adding to $d-3$, \ie $\binom{d-1}{2}$. So in total we have $C\frac{(d-1)(d-2)}{2}$ possible oriented matroids for $P$.
Since not all these oriented matroids will represent points in convex position, and not all will give combinatorially distinct polytopes, this is just an upper bound.
\end{proof}

\subsection{Polytopes with $d+4$ vertices and at most $d+3$ facets}\label{sec:d+4verticesfewfacets}

As we have already mentioned, any $d$-polytope with at most $d+3$ facets has extension complexity not larger than $d+3$. We provide a full classification of those $d$-polytopes that have $d+4$ vertices and at most $d+3$ facets.

It is proven in \cite{Padrol2016} that there is a function $\Dab$ such that every $d$-polytope with at most $d+1+\alpha$ vertices and at most $d+1+\beta$ facets is a $k$-fold pyramid over another such a polytope in dimension at most~$\Dab$; and that $\Dab$ satisfies
$\Dab\leq \min\left\{\F{\aa}{\bb},\F{\bb}{\aa}\right\}$,
where
\[\F{x}{y}=
\begin{cases}
0 & \text{ if }x=0\text{ or }y=0,\\
3x+y-2 & \text{ if }1\leq x\leq 5,\\
\binom{x}{2}+y+3& \text{ if }x\geq 5.
\end{cases}\]

In particular, any $d$-polytope with $d+4$ vertices and at most $d+3$ facets is a $k$-fold pyramid over a $d$-polytope with $d+4$ vertices in dimension at most~$7$. Such polytopes with $d+2$ facets are easy to classify using Lemma~\ref{lem:d+2char}, and those with $d+3$ facets are also easy to deal with using Gale diagrams, because its polar is a $d$-polytope with $d+3$ vertices, and hence has a planar Gale diagram, see~\cite[Section~6.3]{Gruenbaum}.
A small computer enumeration certifies that there are no $7$-polytopes with $11$ vertices and $10$ facets that are not pyramids, and gives us the complete classification of those in dimension less or equal to~$6$.

\begin{figure}[htpb]
\centering 
 \includegraphics[width=.9\linewidth]{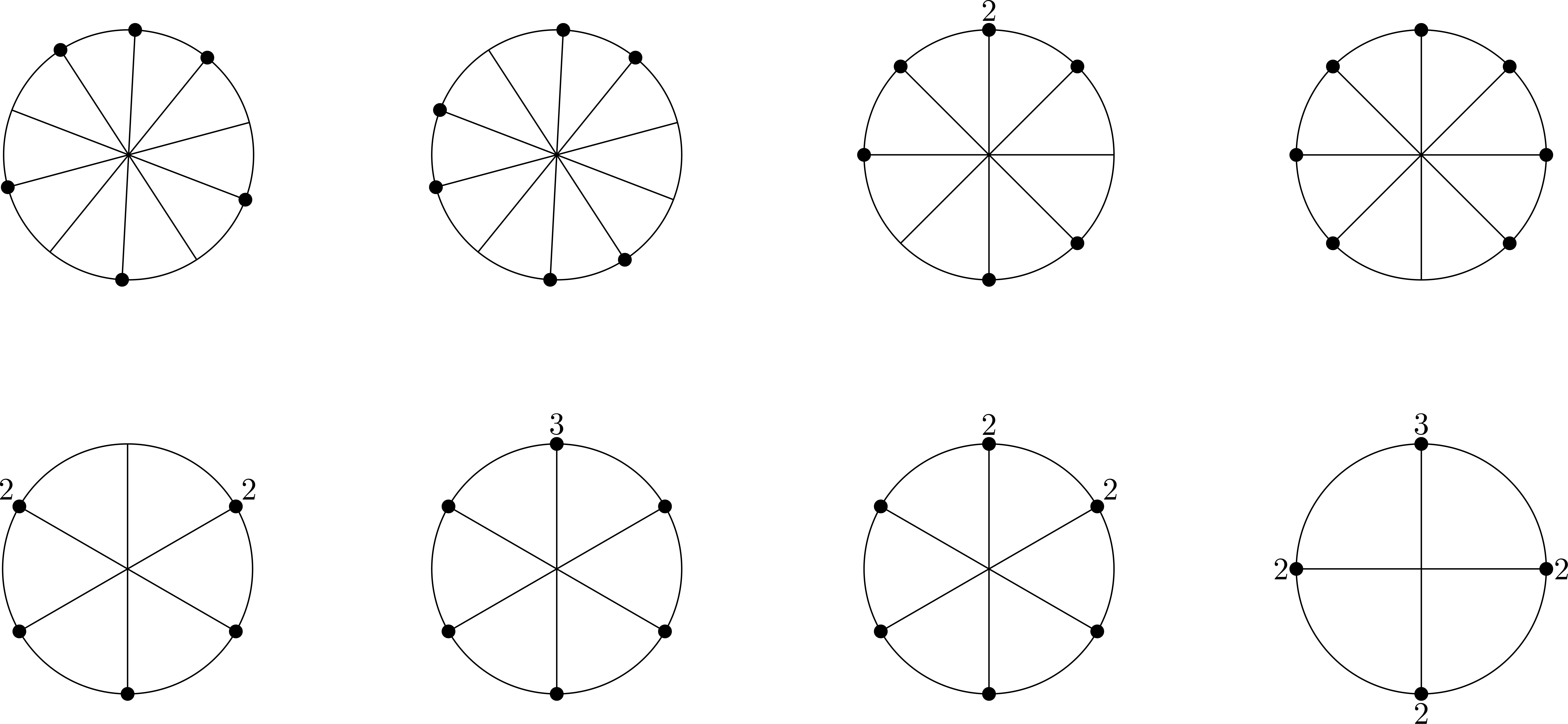}
 
 \caption{Gale diagrams of all $d$-polytopes with $d+3$ vertices and $d+4$ facets that are not a pyramid. (Numbers represent multiplicities, see~\cite[Section~6.3]{Gruenbaum}.)}\label{fig:GDd+4verticesd+3facets}	
\end{figure}

\begin{proposition}\label{prop:d+4verticesd+3facets}

Let $P$ be a $d$-polytope with $d+4$ vertices that is not a pyramid. Then,
\begin{itemize}
 \item $P$ has $d+2$ facets if and only if it is combinatorially equivalent (and hence projectively equivalent) to $\car{1}{3}$;
 \item $P$ has $d+3$ facets if and only if it is combinatorially equivalent to one of the eight polytopes whose polar polytope has a Gale diagram depicted in Figure~\ref{fig:GDd+4verticesd+3facets}, which are of respective dimensions $3$, $3$, $4$, $4$, $4$, $5$, $5$ and $6$.
\end{itemize}
\end{proposition}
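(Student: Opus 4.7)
The plan is to combine the pyramid-reduction bound from~\cite{Padrol2016} recalled just before the proposition with a finite case analysis in the resulting low dimensions. Writing the hypotheses as $\alpha = 3$ extra vertices and $\beta \in \{1,2\}$ extra facets, the quoted bound gives $D(3,1) \le F(1,3) = 4$ and $D(3,2) \le F(2,3) = 7$. Since $P$ is assumed not to be a pyramid, its dimension must be at most $D(\alpha,\beta)$, so it suffices to check $d \le 4$ in the first bullet and $d \le 7$ in the second.

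For the first bullet I would invoke Lemma~\ref{lem:d+2char}: any $d$-polytope with $d+2$ facets is projectively equivalent to $\pcar{k}{n}{m}$ with $k+n+m=d$ and $n,m\ge 1$, and hence has $k+(n+1)(m+1)$ vertices. Demanding $d+4$ vertices reduces to the Diophantine equation $nm=3$, and non-pyramidality forces $k=0$; up to swapping $n$ and $m$ this pins $P$ down to $\Delta_1\times\Delta_3=\car{1}{3}$.

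For the second bullet I would pass to the polar $P^\circ$, which is a $d$-polytope with $d+3$ vertices and $d+4$ facets. Such polytopes are classified by their Gale diagrams, which are planar point configurations of $d+3$ points (after the standard normalization recalled in \cite[Section~6.3]{Gruenbaum}): the combinatorial type of the polytope, its number of facets, and the property of being a pyramid (equivalent to a Gale point being at the origin) are all directly readable from the diagram, and combinatorial equivalence of polytopes corresponds to combinatorial equivalence of diagrams. Dimension by dimension for $3\le d\le 7$, I would then enumerate the planar Gale diagrams with $d+3$ non-zero points whose associated polytope has exactly $d+4$ facets, up to the natural symmetries.

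The only substantial step is this enumeration, and it is where I expect the main obstacle to lie. I would carry it out by direct combinatorial inspection in the lowest dimensions and by a short computer search for $d\in\{6,7\}$. The target output is exactly the eight diagrams displayed in Figure~\ref{fig:GDd+4verticesd+3facets}, no example at all in dimension~$7$, and the claimed list of dimensions $3,3,4,4,4,5,5,6$. The converse direction, that each of these eight Gale diagrams actually gives (via polarity) a non-pyramidal $d$-polytope with $d+4$ vertices and $d+3$ facets, is then immediate from the Gale correspondence.
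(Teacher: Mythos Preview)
Your proposal is correct and follows essentially the same route as the paper: reduce to bounded dimension via the pyramid bound from~\cite{Padrol2016}, handle the $d+2$ facets case with Lemma~\ref{lem:d+2char} and the equation $nm=3$, and handle the $d+3$ facets case by passing to the polar and enumerating planar Gale diagrams with a small computer search (which also confirms that nothing occurs in dimension~$7$). The only cosmetic difference is that you split the pyramid bound into $D(3,1)\le 4$ and $D(3,2)\le 7$, whereas the paper just quotes the single bound~$7$.
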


\section{Lower bounds for extension complexity}\label{sec:bounds}

In this section we study realizations spaces of polytopes~\cite{RichterGebert1997}. Then a simple count of degrees of freedom 
will provide us with instances of polytopes with ``large'' extension complexity.

Any $d$-polytope with $n$ facets containing the origin in its (proper) interior is of the form
\[
 P_A=\set{x\in \RR^d}{Ax\leq \vecone},
\]
for some $n\times d$ matrix $A$ ($\vecone$ is the all-ones column vector). This representation is unique up to row permutations, and hence unique if we consider the facets as being labeled.

Let $P$ be a (combinatorial type of) $d$-polytope with $n$ facets. We consider the set $\rs(P)$ of all (facet-labeled) polytopes combinatorially isomorphic to $P$ that contain the origin in the interior.
Parametrized by the facet inequalities, this is the semi-algebraic set:
\[
 \rs(P)=\set{A\in \RR^{n\times d}}{ P_A \cong P}\subset \RR^{n\times d}.
\]
While this is not the standard definition of realization space (one usually considers its quotient by the set of affine or projective transformations, and not only those realizations containing the origin),  
this version suffices for our purposes and simplifies the exposition. Note that extension complexity is invariant under projective transformations, in particular translations, and hence assuming that the polytope
contains the origin is not a restriction.

We also consider two subsets of $\rs(P)$:
\begin{align*}
 \rs_{N,D}(P)&=\set{A\in \rs(P)}{P_A =\pi(Q) \text{ for some $D$-polytope $Q$ with $N$ facets}},\\
 \rs_{\xc\leq K}(P) &=\set{A\in \rs(P)}{\xc(P_A)\leq K}.
 \end{align*}
The first, $\rs_{N,D}(P)$, contains all realizations of $P$ that arise as projections of $D$-polytopes with $N$ facets. The second, $\rs_{\xc\leq K}(P)$, contains those realizations whose extension complexity is bounded by $K$. Thus, it can also be defined as the union
\begin{equation}\label{eq:union}\rs_{\xc\leq K}(P)=\bigcup_{\substack{N\leq K\\d\leq D\leq N-1}}\rs_{N,D}(P).\end{equation}

Our main result in this section is an estimation of the dimension of $\rs_{\xc\leq K}(P)$: as long as its dimension is smaller than that of $\rs(P)$, there will be instances of $P$ with extension complexity larger than $K$. Note that here we talk about two different dimensions that should not be confused: the (affine) dimension~$d$ of $P$, and the (algebraic) dimension of its realization space, which is that of the smallest algebraic variety of $\RR^{n\times d}$ that contains it. We refer to~\cite[Chapter~2]{BochnakCosteRoy1998} for the concepts concerning semi-algebraic sets appearing in the proof.

\thmdimrs*

\begin{proof}%

 \newcommand{\pDd}{\pi}

 Let $\pDd:\RR^D\to\RR^d$ be the projection that forgets the last $D-d$ coordinates, and consider the set of $D$-dimensional polytopes with $N$ facets that contain the origin in their interior and are mapped onto a polytope combinatorially equivalent to $P$
\[\cQ_{N,D}(P):=\set{B\in \RR^{N\times D}}{\pDd(P_B)\cong P}\subset\RR^{N\times D}.\]
 Its dimension is obviously at most $DN$.
 
 By construction, there is a natural surjective map $\varphi:\cQ_{N,D}(P)\to \rs_{N,D}(P)$ that maps each matrix $B\in\cQ_{N,D}(P)$ to the matrix $A$ such that $P_A=\pDd(P_B)$. 
 The only subtlety in making this map explicit lies in deciding the order of the inequalities defining $P_A$; but this can be done separately for each combinatorial type of $P_B$, in such a way that 
 the map is semi-algebraic.

 If $A$ belongs to $\rs_{N,D}(P)$, then its preimage $\varphi^{-1}(A)$ is a semi-algebraic 
 subset of $\cQ_{N,D}(P)$ (see~\cite[Prop.~2.2.7]{BochnakCosteRoy1998}). Its dimension is at least $(D+1)(D-d)$. 
 Indeed, if $Q$ is a $D$-polytope such that $\pDd(Q)=P_A$, and $\tau:\RR^D\to\RR^D$ is an affine transformation that preserves the first $d$ coordinates, then applying $\tau$ does not change its image under $\pDd$, \ie $\pDd(\tau(Q))=P_A$.
 The space of such affine transformations has dimension $(D+1)(D-d)$, since they are defined as $x\mapsto Mx+t$ for a matrix $M\in\RR^{D\times D}$ of the form
 \[
       M= \left(
        \begin{array}{c c c c c c}
                1 & \cdots & 0 & 0 & \cdots & 0 \\
                \vdots & \ddots & \vdots & \vdots & \ddots& \vdots\\
                0  &\cdots & 1 & 0 & \cdots & 0 \\
		\ast & \cdots & \ast & \ast & \cdots & \ast \\
		\vdots & \ddots & \vdots & \vdots & \ddots& \vdots\\
		\undermat{D}{\ast & \cdots & \ast & \ast & \cdots & \ast}
                \end{array}
    \right)
    \begin{array}{l@{}}
    \left.\begin{array}{@{}c@{}}\null\\\null\\\null\end{array}\right\}~d\\
    \left.\begin{array}{@{}c@{}}\null\\\null\\\null\end{array}\right\}~D-d
  \end{array}\vspace{.5cm}
    \] 
 and a vector $t\in\RR^D$ with zeros in its first $d$ coordinates. Since these affine transformations act freely on $\varphi^{-1}(A)$, the dimension of (each irreducible component of) $\varphi^{-1}(A)$ is at least $(D+1)(D-d)$.
 
 Hence, we can bound the dimension of $\rs_{N,D}(P)$ as follows (see~\cite[Thm.~9.3.2]{BochnakCosteRoy1998} and \cite[Cor.~4.2]{Coste2000}):
 \begin{equation}\label{eq:bounddim}\dim(\rs_{N,D}(P))\leq DN-(D+1)(D-d).\end{equation}
                                                                          
 Therefore, a necessary condition for $\dim(\rs_{N,D}(P))=\dim(\rs(P))$ is that 
 \begin{equation}\label{eq:boundN}
 N\geq \frac{(D+1)(D-d) + r}{D}.
 \end{equation}

 By~\eqref{eq:union}, $\rs_{\xc\leq K}(P)$ is a full-dimensional subset of $\rs(P)$ if and only if some $\rs_{N,D}(P)$ with $N\leq K$ is full-dimensional. Now, the minimum of the function $f(D)=\frac{(D+1)(D-d) + r}{D}$ is attained at $D=\sqrt{r - d}$. Therefore, by~\eqref{eq:boundN}, unless 
 $$K\geq f(\sqrt{r - d})=2 \sqrt{r - d} -d + 1$$
 $\rs_{\xc\leq K}(P)$ cannot be full-dimensional.
\end{proof}

The realization space of any simple $d$-polytope with $n$ facets has dimension~$dn$, because small perturbations of the facets keep the combinatorial type. Similarly, the realization space of any simplicial $d$-polytope with $n$ vertices has dimension $dn$, because its facets can be derived from the vertex coordinates, and the $n$ vertices can be perturbed without changing the combinatorial type.
\begin{corollary}\label{cor:lbxc}
 If $P$ is a simple $d$-polytope with $n$ facets, or a simplicial $d$-polytope with $n$ vertices, then every generic realization of $P$ has extension complexity at least $2 \sqrt{d n - d} -d + 1$.
\end{corollary}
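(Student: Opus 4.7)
The plan is to invoke the preceding theorem with $r=dn$ in both cases, so the only real work is to verify that the realization space of each type of polytope has (algebraic) dimension exactly $dn$.

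I would begin with the simple case. Suppose $P$ is a simple $d$-polytope with $n$ facets, and let $A_0\in\rs(P)$ be any realization. Simplicity is an open condition on $A$: it amounts to requiring that for every vertex of $P_A$ exactly $d$ facet hyperplanes meet in general position, which is controlled by certain $d\times d$ minors of $A$ being nonzero. Moreover, under small perturbations of the rows of $A_0$ each vertex of $P_{A_0}$ moves continuously, no vertex splits and no new incidences appear, so the combinatorial type of $P_A$ is locally constant on $A_0$. Hence $\rs(P)$ contains a Euclidean-open neighborhood of $A_0$ inside $\RR^{n\times d}$, which forces $\dim\rs(P)=dn$. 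Plugging $r=dn$ into Theorem~\ref{thmdimrs} immediately yields the desired bound $\xc(R)\geq 2\sqrt{dn-d}-d+1$ for almost every $R\in\rs(P)$.

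For the simplicial case I would reduce to the simple case by polar duality. If $P$ is a simplicial $d$-polytope with $n$ vertices, then (after translating so that $0\in\intr(P)$) the polar $P^{\star}$ is a simple $d$-polytope whose $n$ facets correspond to the $n$ vertices of $P$. Since $\xc(P)=\xc(P^{\star})$ by \cite[Proposition~2.8]{GouveiaParriloThomas2013}, and since polarity induces a semi-algebraic isomorphism between the realization spaces of $P$ and of $P^{\star}$ that preserves the notions of ``full-dimensional'' and ``dense'' subset, the result for $P^{\star}$ from the previous paragraph transports to $P$ with the same bound $2\sqrt{dn-d}-d+1$.

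The steps above are essentially routine once the theorem is in hand; the only place requiring a little care is checking that the polarity map between $\rs(P)$ and $\rs(P^{\star})$ genuinely preserves the measure-zero / full-dimensional dichotomy, so that ``almost every realization'' on one side corresponds to ``almost every realization'' on the other. This is straightforward because, up to translation (which can be accounted for by a finite-dimensional action), polarity is given by coordinate-wise rational functions on the open region where $0$ lies in the interior, hence a semi-algebraic diffeomorphism on that region, which preserves dimension of semi-algebraic subsets.
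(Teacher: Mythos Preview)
Your argument is correct. For the simple case, it coincides with the paper's: both observe that $\rs(P)$ is open in $\RR^{n\times d}$ (small perturbations of facet inequalities preserve the combinatorial type), hence $r=dn$, and then plug into the theorem.

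For the simplicial case the approaches differ. The paper applies the theorem directly to $P$: since $P$ is simplicial, every facet is determined by its $d$ vertices, so the facet-inequality parametrization of $\rs(P)$ factors through the vertex coordinates, and perturbing the $n$ vertices freely shows $\dim\rs(P)=dn$. You instead pass to the polar $P^\star$, apply the simple case there, and transport the conclusion back using $\xc(P)=\xc(P^\star)$ together with the fact that polarity is a semi-algebraic isomorphism of realization spaces (so that ``generic'' on one side matches ``generic'' on the other). Both routes are valid; the paper's is one line shorter because it avoids checking that polarity preserves the full-dimensional/dense dichotomy, while yours has the mild advantage of only ever invoking the theorem in the setting (facet-parametrized, full-dimensional in $\RR^{n\times d}$) where its proof was explicitly carried out.
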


Now we switch back to polytopes with few vertices/facets. The previous corollary shows us the existence of $d$-polytopes with $d+\alpha+1$ vertices and extension complexity at least $2 \sqrt{d(d+\alpha)} -d + 1$. Obviously, this lower bound is never larger than the trivial upper bound for extension complexity: 
\[2 \sqrt{d^2+d\alpha} -d + 1\leq 2 \sqrt{d^2+d\alpha+\frac{\alpha}{4}} -d + 1=2 \left(d+\frac{\alpha}{2}\right) -d + 1=d+\alpha+1.\]
However, it can get arbitrarily close if we fix $\alpha$ and let $d$ grow. In particular, if $d>(\frac{\alpha-1}{2})^2$, then 
\begin{align*}
2 \sqrt{d^2+d\alpha} -d + 1&=2 \sqrt{d^2+d(\alpha-1)+d} -d + 1\\
&>2 \sqrt{d^2+d(\alpha-1)+\left(\frac{\alpha-1}{2}\right)^2} -d + 1\\&=2 \left(d+\frac{\alpha-1}{2}\right) -d + 1=d+\alpha. 
\end{align*}

\begin{corollary}\label{cor:alphabound}
For any $d>(\frac{\alpha-1}{2})^2$ there are $d$-polytopes with $d+1+\alpha$ vertices (and $d$-polytopes with $d+1+\alpha$ facets) with extension complexity $d+1+\alpha$.
\end{corollary}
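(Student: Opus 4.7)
The plan is to obtain this as an essentially immediate consequence of Corollary~\ref{cor:lbxc}. First I would note that for every $d \geq 1$ and every $\alpha \geq 0$ there exist simplicial $d$-polytopes with $d+1+\alpha$ vertices (for instance, cyclic polytopes, or suitable stackings of simplices); dually there exist simple $d$-polytopes with $d+1+\alpha$ facets. Fixing such a combinatorial type $P$, Corollary~\ref{cor:lbxc} applied with $n = d+1+\alpha$ yields that a generic realization satisfies
\[
\xc(P) \;\geq\; 2\sqrt{d(d+1+\alpha)-d} - d + 1 \;=\; 2\sqrt{d(d+\alpha)} - d + 1.
\]

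The second step is the elementary inequality
\[
2\sqrt{d(d+\alpha)} - d + 1 \;>\; d+\alpha \quad\Longleftrightarrow\quad d > \left(\tfrac{\alpha-1}{2}\right)^{\!2},
\]
which the excerpt already sketches in the paragraph preceding the corollary. To justify it cleanly, I would rewrite the left inequality as $2\sqrt{d^2+d\alpha} > 2d + \alpha - 1$, check that the right-hand side is nonnegative in the regime of interest (when $\alpha \geq 1$ this is automatic; when $\alpha = 0$ the inequality $2\sqrt{d^2} > 2d - 1$ is trivial), square both sides, and reduce to $4d(d+\alpha) > (2d+\alpha-1)^2$, which simplifies to $4d > (\alpha-1)^2$.

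Combining the two steps: the generic realization has $\xc(P) > d+\alpha$ strictly, but also $\xc(P) \leq d+1+\alpha$ by the trivial upper bound (the number of vertices, using that $\xc$ is invariant under polar duality). Since $\xc$ takes integer values, this forces $\xc(P) = d+1+\alpha$, giving the vertex version. The facet version is then obtained by polar duality, applied to a generic simple polytope with $d+1+\alpha$ facets.

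There is essentially no obstacle here beyond bookkeeping — the real content lives in Theorem~\ref{thmdimrs} and its reformulation as Corollary~\ref{cor:lbxc}; the only point requiring slight care is isolating the regime where the squaring manipulation is valid (i.e.\ both sides nonnegative), but this is handled by the $\alpha=0$ special case done by inspection.
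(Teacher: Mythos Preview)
Your proposal is correct and follows essentially the same approach as the paper: the argument in the paper is the displayed computation in the paragraph immediately preceding the corollary, which applies Corollary~\ref{cor:lbxc} with $n=d+1+\alpha$ and then verifies that the resulting lower bound exceeds $d+\alpha$ precisely when $d>\left(\tfrac{\alpha-1}{2}\right)^2$. Your version is slightly more explicit about the existence of simplicial polytopes with the right vertex count and about the validity of the squaring step, but the substance is identical.
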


If we specialize the bound of Corollary~\ref{cor:lbxc} to polygons, we get the result we alluded to in the introduction. It improves slightly the previous best lower bounds for the worst extension complexity of an $n$-gon~\cite{FioriniRothvossTiwary2012}.

\begin{corollary}\label{cor:polygonbound}
The extension complexity of a generic (even rational) $n$-gon is at least $2\sqrt{2n-2} - 1$.
\end{corollary}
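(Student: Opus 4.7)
The plan is to specialize Corollary~\ref{cor:lbxc} directly to the case $d=2$. A polygon is trivially both simple and simplicial, and an $n$-gon has exactly $n$ vertices and $n$ edges (facets). Plugging $d = 2$ and the same $n$ into the bound $2\sqrt{dn - d} - d + 1$ from Corollary~\ref{cor:lbxc} yields
\[
2\sqrt{2n - 2} - 2 + 1 = 2\sqrt{2n-2} - 1,
\]
which is exactly the claimed inequality. So the bulk of the work is already done in Theorem~\ref{thmdimrs} and its corollary; the polygon case is just an evaluation.

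The only genuinely new content is the parenthetical \emph{``even rational''}. Here I would invoke the conclusion of Theorem~\ref{thmdimrs} in its strong form: the set $\rs(P)\setminus \rs_{\xc\leq K}(P)$ is a \emph{full-dimensional dense} semi-algebraic subset of $\rs(P)$ for every $K < 2\sqrt{2n-2} - 1$. Since $\rs(P) \subset \RR^{n\times 2}$ is an open semi-algebraic set (a small perturbation of the edge-inequalities of a convex $n$-gon containing the origin still defines a convex $n$-gon containing the origin), and rational matrices are dense in $\RR^{n\times 2}$, the rational points of $\rs(P)$ are dense in $\rs(P)$. Intersecting this dense set with the dense open subset $\rs(P)\setminus \rs_{\xc\leq K}(P)$ produces rational $n$-gons with extension complexity at least $2\sqrt{2n-2} - 1$.

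I do not anticipate any real obstacle, since both pieces (the numerical specialization and the rational density) are immediate from results already proved. The only point that requires a line of care is justifying that rational matrices are dense in $\rs(P)$ for a polygon: this follows because $\rs(P)$ is open in $\RR^{n\times 2}$ (polygons are simple polytopes, so all small perturbations of the defining inequalities preserve the combinatorial type), and openness together with density of $\QQ^{n\times 2}$ in $\RR^{n\times 2}$ gives density of $\rs(P)\cap \QQ^{n\times 2}$ in $\rs(P)$.
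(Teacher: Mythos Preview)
Your approach is exactly the paper's: specialize Corollary~\ref{cor:lbxc} to $d=2$, and for the rational clause use that rational points are dense in the open realization space of a simple polytope. One small imprecision worth fixing: you call $\rs(P)\setminus \rs_{\xc\leq K}(P)$ a ``dense \emph{open}'' subset, but the theorem only guarantees it is full-dimensional and dense, and the intersection of two merely-dense sets can be empty (think rationals and irrationals). The easy patch is to observe that the lower-dimensional semi-algebraic set $\rs_{\xc\leq K}(P)$ is contained in a proper Zariski-closed subset of $\RR^{n\times 2}$, whose complement in the open set $\rs(P)$ is genuinely open and dense and therefore meets $\QQ^{n\times 2}$.
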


\section*{Acknowledgements}
Many ideas for Section~\ref{sec:bounds} grew out from interesting conversations with Julian Pfeifle while working on~\cite{PadrolPfeifle2015}. I also want to thank Raman Sanyal and G\"unter M.\ Ziegler for many stimulating discussions and valuable insights.

\bibliographystyle{siamplain}
\bibliography{few}

\end{document}